\documentclass[pdflatex,sn-basic,Numbered]{sn-jnl}

\usepackage{graphicx}
\usepackage{xspace}
\usepackage{bm}
\usepackage{amsmath, amsthm, amssymb}
\usepackage{enumitem}
\usepackage{booktabs}
\usepackage{array}
\usepackage{multirow}
\usepackage{caption}
\usepackage{subcaption}
\usepackage{xcolor}
\usepackage{manyfoot}

\newtheorem{theorem}{Theorem}
\newtheorem{lemma}[theorem]{Lemma}

\newtheorem{proposition}[theorem]{Proposition}

\theoremstyle{definition}

\newtheorem{remark}[theorem]{Remark}

\title[Graph-Based ECG Synthesis]{Graph-Based ECG Synthesis with Activation-Consistency Certification and Diagnostics-Aware Morphology Curation}

\author*[1]{\fnm{Yongjun} \sur{Yoo}}\email{yooyj228@snu.ac.kr}
\affil*[1]{\orgdiv{Department of Mathematical Science}, \orgname{Seoul National University}, \orgaddress{\city{Seoul}, \country{South Korea}}}

\abstract{Synthetic electrocardiogram (ECG) generation can support algorithm development and robustness evaluation, but simulated signals must preserve interpretable activation, recovery, and morphology properties. We present a graph-based ECG synthesis framework that combines activation-consistency certification with diagnostics-aware morphology curation. A unified heart graph supports an eikonal-template backend (ET) and a pseudo-diffusion reaction--eikonal backend (RE). We formulate graph Eikonal activation as a Bellman fixed-point problem and use the Bellman residual as a computable certificate for activation-time consistency. Each simulated ECG is evaluated by a two-stage diagnostics pipeline that separates metric computation from experiment-specific acceptance policies. On the cardiac graph, RE-derived activation times showed near-millisecond agreement with the Eikonal backbone and achieved $R^2=0.99876$ after causal predecessor filtering. Recovery experiments showed that endo-epicardial APD gradients determined the main T-wave morphology window, whereas the diffusion strength~$\kappa$ provided secondary repolarization smoothing. In final balanced multi-lead curation, RE accepted 658/2000 samples versus 578/2000 for ET and increased per-model morphology coverage from 0.09248 to 0.09888. The framework provides a conservative basis for controllable and curated synthetic ECG generation.}

\keywords{synthetic ECG, graph-based modeling, Eikonal activation, Bellman residual, morphology curation}

\begin{document}

\maketitle

\section{Introduction}
\label{sec:intro}

Synthetic electrocardiogram (ECG) generation is increasingly useful for biomedical signal-processing research. Curated synthetic ECGs can support algorithm development, robustness evaluation, rare-morphology evaluation, and data augmentation when real recordings are limited by privacy constraints, annotation cost, class imbalance, or incomplete coverage of clinically relevant waveform patterns~\cite{Kligfield2007ECGStandardization, Rautaharju2009STTQT}. However, synthetic ECGs are useful only when their morphology remains interpretable. A generated trace that is numerically stable but lacks plausible activation order, P-QRS-T structure, repolarization behavior, or lead-wise consistency can mislead downstream evaluation rather than improve it.

Existing ECG synthesis approaches face a trade-off between realism, controllability, and quality control. Recent reviews place current methods into mathematical, computer-vision-inherited, and deep generative families, while also noting the lack of standardized quality-assessment criteria across synthetic ECG studies~\cite{Zanchi2025ScopingReview}. Data-driven generative models, including GAN-based, conditional, and diffusion-based ECG generators, can learn complex waveform distributions and support augmentation or semantically guided synthesis~\cite{Zhang2021GAN12Lead, Xia2023ConditionalECG, Lin2025TransDiffECG}, but their latent factors are often difficult to connect to activation and recovery mechanisms, and additional screening is still required to ensure that generated signals satisfy physiological and diagnostic constraints. In contrast, computational electrophysiology models provide interpretable parameters and mechanistic structure, but large-scale sampling can produce many numerically valid yet morphologically implausible ECGs. Fast eikonal and reaction--eikonal methods are attractive for scalable simulation because they separate activation timing from local waveform generation~\cite{Neic2017ReactionEikonal, vanDam2009ActivationRecovery}, but synthetic ECG generation still requires explicit checks that activation fields are consistent and that accepted signals meet morphology criteria.

This study addresses these issues with a graph-based ECG synthesis framework designed around two quality-control layers. We choose a graph formulation to keep the propagation workflow simpler, faster, and easier to reproduce than a full mesh-based partial-differential-equation simulation, while retaining enough anatomical structure for controlled ECG synthesis experiments. The first layer concerns activation consistency. We formulate graph eikonal activation as a Bellman fixed-point problem and use the associated residual as a computable certificate for activation-time fields. This certificate is not intended to replace full reaction--diffusion validation; rather, it provides a practical way to test whether an extracted or surrogate activation field remains consistent with the graph-based activation backbone used by the ECG generator.

The second layer concerns ECG signal morphology. We build a unified heart graph and shared 12-lead forward pipeline that support two final generators: an eikonal-template backend~(\textrm{ET}) and a pseudo-diffusion reaction--eikonal backend~(\textrm{RE}). The heart graph is intentionally simplified so that the full synthesis pipeline remains transparent and reproducible rather than tied to a heavy subject-specific preprocessing stack. The parameterization is organized into activation and recovery controls so that activation-atlas, recovery-atlas, and sensitivity experiments can test whether interpretable knob-response behavior is preserved. Each generated ECG is then evaluated by a diagnostics-aware curation pipeline that separates metric computation from experiment-specific acceptance policies. This separation allows broad throughput screening, recovery-mechanism analysis, sensitivity testing, and final balanced multi-lead morphology curation to use the same underlying diagnostics without treating all acceptance rates as equivalent.

The present work focuses on the forward synthesis direction: activation and recovery fields are specified on the cardiac graph and projected through a fixed heart--torso--electrode configuration to generate ECG leads. A complementary inverse problem--estimating latent cardiac activation or recovery structure from observed ECGs--is therefore left as future work. The graph formulation, Bellman consistency certificate, and diagnostics-aware acceptance rules developed here could provide useful constraints or initialization signals for such inverse modeling, but inverse estimation is not evaluated in this study.

\begin{figure}[t]
  \centering
  \begin{subfigure}[t]{0.31\linewidth}
    \centering
    \includegraphics[width=0.95\linewidth]{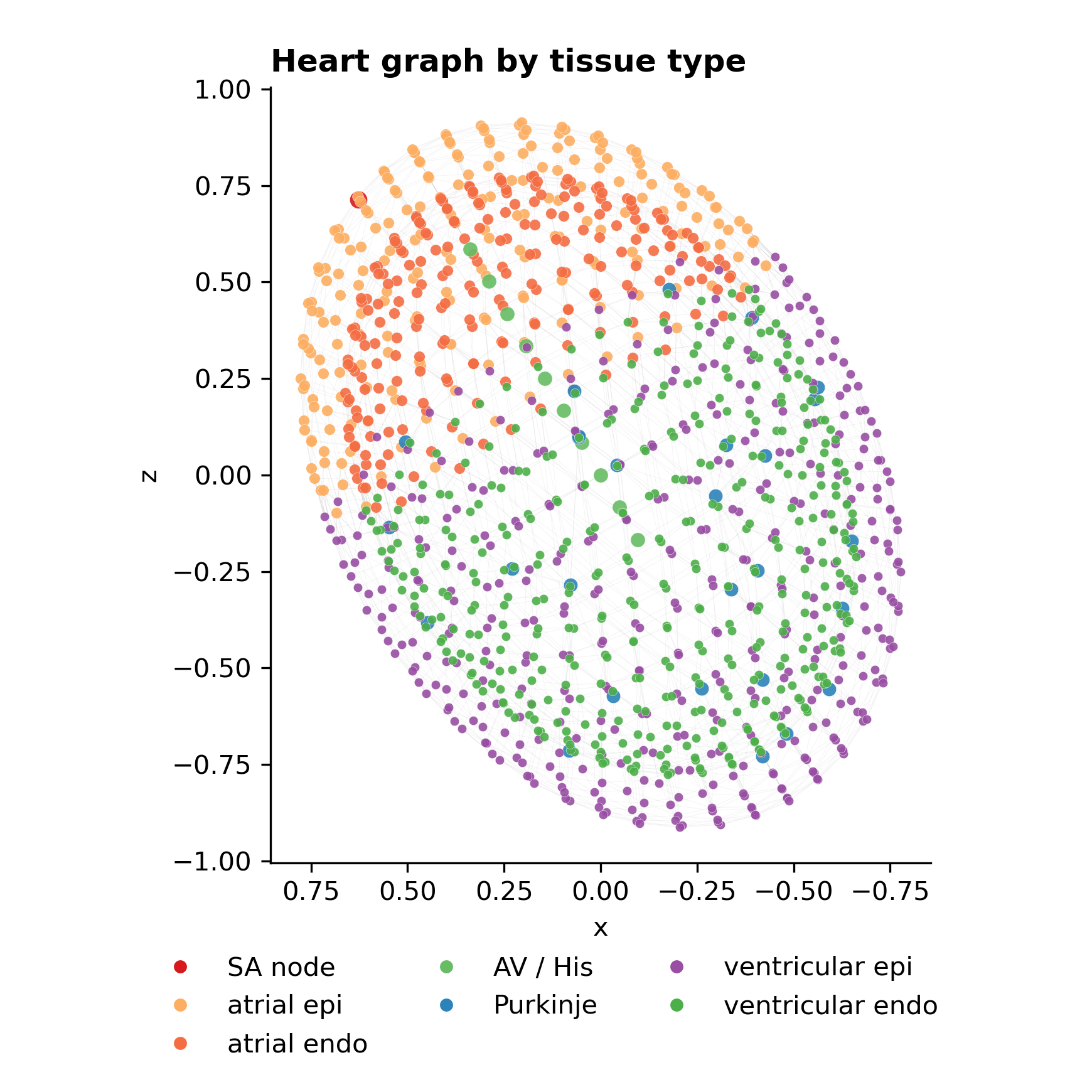}
    \caption{Tissue-colored cardiac graph.}
    \label{fig:heart_graph_tissue_context}
  \end{subfigure}
  \hfill
  \begin{subfigure}[t]{0.31\linewidth}
    \centering
    \includegraphics[width=0.95\linewidth]{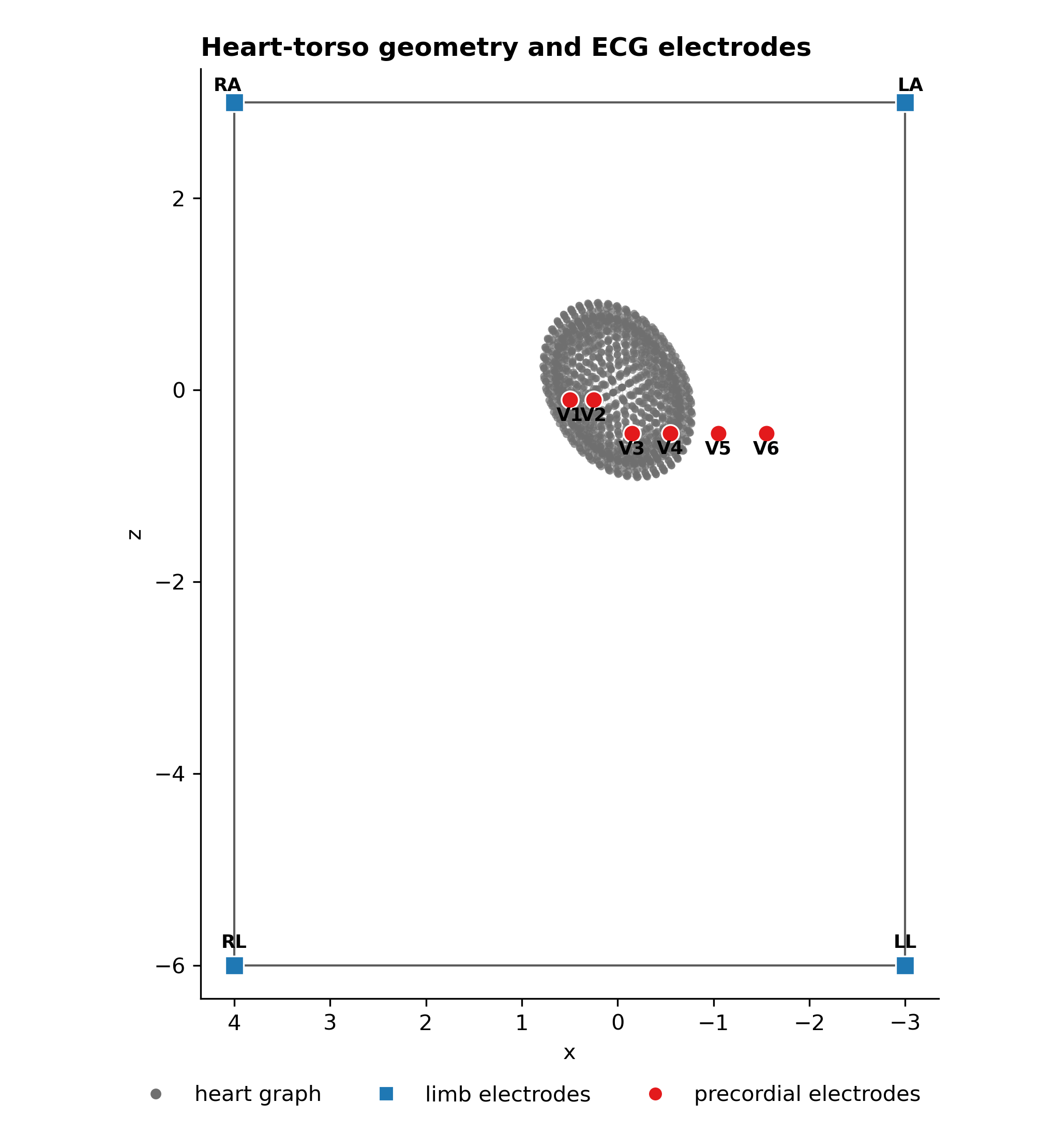}
    \caption{Heart--torso graph with ECG electrodes.}
    \label{fig:heart_torso_electrodes_context}
  \end{subfigure}
  \hfill
  \begin{subfigure}[t]{0.31\linewidth}
    \centering
    \includegraphics[width=0.95\linewidth]{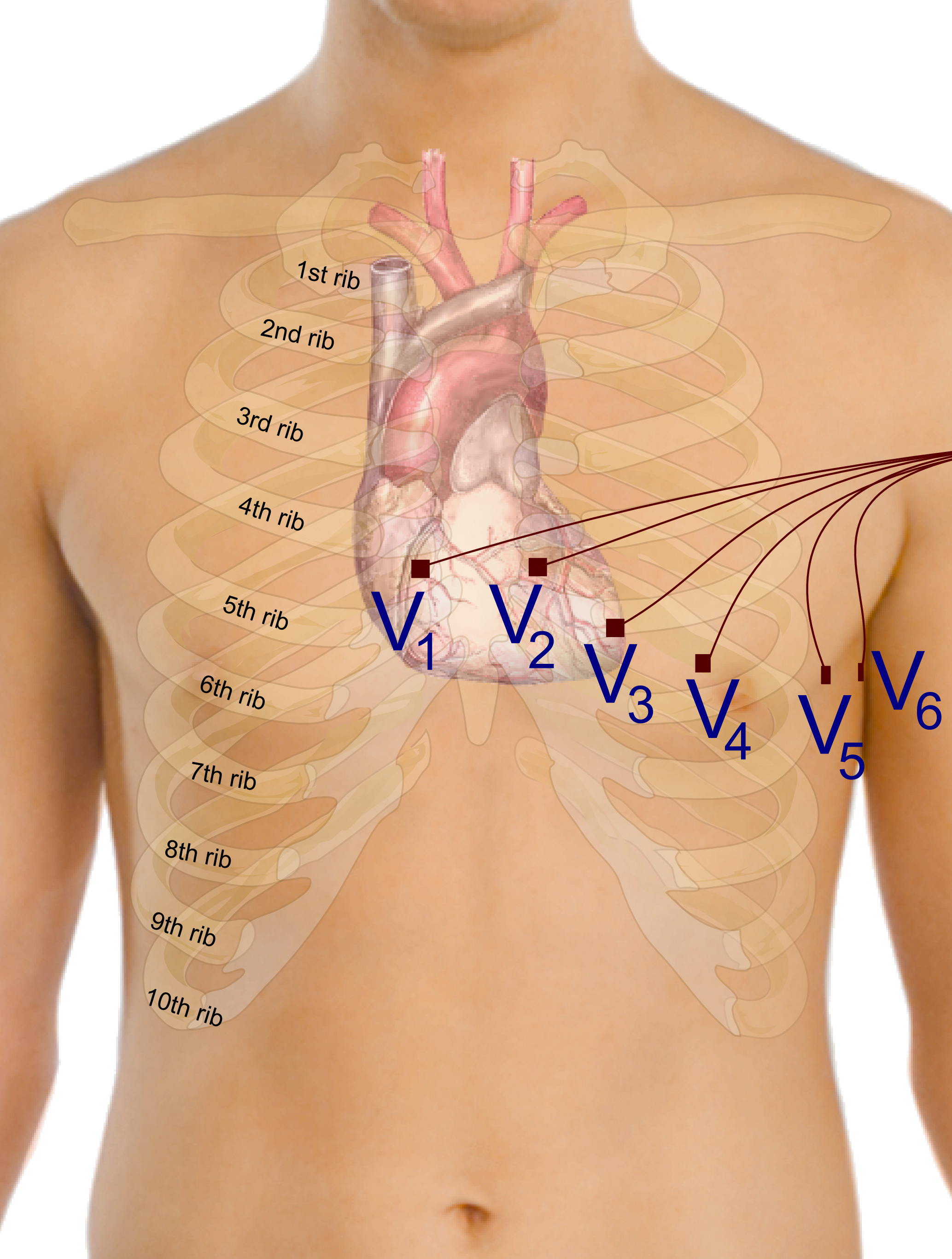}
    \caption{Precordial lead anatomy.}
    \label{fig:precordial_leads_context}
  \end{subfigure}
  \caption{Graph-based geometry and anatomical lead-placement context for the forward ECG synthesis pipeline. Panels (a) and (b) show the simplified experimental geometry; panel (c) provides anatomical context for the precordial lead layout. Panel (c) image source: Mikael Haggstrom, public-domain CC0 illustration from Wikimedia Commons.}
  \label{fig:geometry_lead_context}
\end{figure}

The main contributions of this study are:
\begin{enumerate}[leftmargin=*]
  \item We formulate graph eikonal activation as a Bellman fixed-point problem and derive a computable residual-based certificate for activation-time consistency.
  \item We develop a unified graph-based ECG synthesis pipeline with \textrm{ET} and \textrm{RE} backends, separated activation and recovery controls, and a shared 12-lead ECG forward model.
  \item We introduce a diagnostics-aware curation pipeline that separates metric computation from experiment-specific acceptance policies for throughput screening, recovery analysis, sensitivity testing, and final morphology curation.
  \item We validate the framework on cardiac ECG synthesis experiments, showing preserved activation controllability and a modest \textrm{RE} advantage in final morphology curation, including higher accepted yield (658/2000 vs. 578/2000) and feature-space coverage (0.09888 vs. 0.09248). As an application-oriented evaluation, we also test whether biophysically motivated synthetic ECGs provide a stabilizing initialization signal for low-label P/QRS/T delineation on PTB-XL+, showing that parameter-informed pretraining can reduce model-collapse risk and variance under extreme label scarcity.
\end{enumerate}

% ======================================================================
\section{Methods}
\label{sec:methods}
% ======================================================================

% ----------------------------------------------------------------------
\subsection{Unified heart graph and forward pipeline}
\label{sec:heart_graph}
% ----------------------------------------------------------------------

All simulations share a common computational substrate: a unified heart graph $G=(V,E)$ with dual-layer atrial and ventricular structures, a specialized cardiac conduction system, and atrioventricular insulation through a fibrous annulus~\cite{Padala2021CardiacConductionSystem, Dobrzynski2013CardiacConductionSystem, Saremi2017FibrousSkeleton}. Each node represents a spatial point in the cardiac anatomy, and each weighted edge represents a propagation pathway with tissue-specific
conductivity. The graph includes node sets for sinoatrial, atrial (left/right, endo/epi), atrioventricular, His-bundle, Purkinje (left/right), and ventricular (endo/epi, LV/RV) tissue types.

Both solver backends described below operate on the same graph and use the same parameters for geometry, tissue labels, source placement, and measurement configuration. The torso geometry, heart placement, rotation convention, and 12-lead definition are fixed across all experiments.

After the transmembrane potential field~$V_m$ is generated by either backend, the same graph-based forward pipeline is used to produce the 12-lead ECG. First, the cardiac extracellular potential~$\phi_e$ is recovered from~$V_m$ through a quasi-static relation on the heart graph:
\begin{equation}\label{eq:phi_e}
  L_{\mathrm{cond}}\,\phi_e
  \;=\;
  -\,L_{\mathrm{prop}}\,V_m,
\end{equation}
where $L_{\mathrm{prop}}$ and $L_{\mathrm{cond}}$ are weighted graph Laplacian operators derived from the shared conductivity structure~\cite{Neic2017ReactionEikonal, Rudy2015ForwardProblem}. The graph Laplacian represents conductivity-weighted potential differences across neighboring nodes; here it converts spatial variation in~$V_m$ into source terms and distributes the resulting extracellular potential over the cardiac graph.

Second, let $\phi_K$ denote the restriction of~$\phi_e$ to the cardiac boundary nodes coupled to the torso graph. If $L_{UU}$ and $L_{UK}$ are the torso-graph Laplacian blocks corresponding respectively to unknown torso nodes~$U$ and their coupling to the cardiac boundary nodes~$K$, the torso potential~$u_T$ is obtained by solving
\begin{equation}\label{eq:torso_forward}
  L_{UU}\,u_T
  \;=\;
  -\,L_{UK}\,\phi_K .
\end{equation}
Electrode potentials sampled from~$u_T$ are then projected onto the standard 12-lead configuration~\cite{Kligfield2007ECGStandardization}. The torso geometry, heart placement, rotation convention, electrode locations, and lead definitions are fixed across all experiments.

This shared $V_m \to \phi_e \to u_T \to \text{ECG}$ pipeline ensures that when the two backends produce different ECGs, the differences can be attributed to the transmembrane dynamics. Each simulation also produces a diagnostics package--activation and repolarization summaries, quality metrics, and metadata for reproducible sampling and filtering--to support the diagnostics-aware curation described in Section~\ref{sec:curation}.

% ----------------------------------------------------------------------
\subsection{Activation-time theory: Bellman certificate}
\label{sec:theory}
% ----------------------------------------------------------------------

This section establishes a computable certificate for activation-time consistency on graphs. We show that the graph Eikonal activation time admits a Bellman fixed-point characterization, and derive a comparison principle that bounds the activation-time error using a computable residual. Proofs are given in the Supplementary Material (Section~S1).

\subsubsection{Definitions}

Let $G=(V,E)$ be a finite undirected graph with source set $S\subset V$ and $N:=|V|$. For each edge $\{i,j\}\in E$, let $d_{ij}>0$ be its geometric length and $v(i)>0$ a node-wise conduction speed. Define symmetric travel-time weights
\begin{equation}\label{eq:weight}
  w_{ij}
  \;:=\;
  \frac{d_{ij}}{\max\{v(i),\,v(j)\}},
\end{equation}
and the multi-source shortest-path activation time
\begin{equation}\label{eq:eikonal_time}
  T(i)
  \;:=\;
  \min_{s\in S}\;\min_{\gamma:\,s\to i}
  \sum_{k} w_{v_{k-1}v_k},
  \qquad
  T(s)=0
  \;\;\text{for } s\in S.
\end{equation}
On a graph, this activation time can be computed exactly by Dijkstra's algorithm. We retain the term ``Eikonal'' to emphasize the connection to wavefront propagation in cardiac electrophysiology~\cite{Keener1991EikonalCurvature, Sethian1996FastMarching, vanDam2009ActivationRecovery}, rather than the algorithmic implementation.

We introduce the Bellman operator $F:X\to X$ on the space $X:=\{x:V\to\mathbb{R}\cup\{\infty\}: x(s)=0\;\forall s\in S\}$:
\begin{equation}\label{eq:bellman_op}
  (Fx)(i)
  \;:=\;
  \begin{cases}
    0, & i\in S,\\[2pt]
    \displaystyle\min_{j\sim i}\bigl(x(j)+w_{ji}\bigr), & i\notin S,
  \end{cases}
\end{equation}
and the Bellman residual
\begin{equation}\label{eq:bellman_residual}
  \delta_{\mathrm{Bell}}(x) \;:=\; \|x - Fx\|_\infty,
\end{equation}
which measures how far a candidate time field~$x$ deviates from the Bellman fixed-point equation.

\subsubsection{Main results}

\begin{theorem}[Bellman fixed point]
\label{thm:bellman_fp}
The shortest-path activation time~$T$ is the unique element of~$X$ satisfying $T=FT$.
\end{theorem}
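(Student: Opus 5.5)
We prove the two halves separately: first that $T$ is a fixed point of $F$, then that any fixed point $x\in X$ coincides with $T$. Uniqueness splits into the inequalities $x\le T$ and $x\ge T$. Throughout we use the conventions $\infty+w=\infty$ and $\min\emptyset=\infty$. Nodes not connected to $S$ therefore have $T(i)=\infty$. We rely only on finiteness of $V$ and strict positivity of the weights $w_{ij}$ in \eqref{eq:weight}.

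\textbf{Step 1: $T=FT$.} By definition $T\in X$, and $(FT)(s)=0=T(s)$ for $s\in S$. Fix $i\notin S$.
\begin{itemize}
\item For $T(i)\le (FT)(i)$: for any neighbour $j\sim i$, append the edge $\{j,i\}$ to any path $s\to j$. This gives a path $s\to i$ of cost (path cost)$+w_{ji}$. Minimizing over paths to $j$ yields $T(i)\le T(j)+w_{ji}$.
\item For $T(i)\ge (FT)(i)$ with $T(i)<\infty$: a minimizing path exists because there are finitely many simple paths, and with positive weights cycles never help. This path has a last edge $\{j,i\}$ with $j\sim i$, since $i\notin S$ forces length $\ge1$. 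Its prefix is a path $s\to j$, so $T(i)\ge T(j)+w_{ji}\ge (FT)(i)$.
\item If $T(i)=\infty$, then every neighbour $j$ also has $T(j)=\infty$, so $(FT)(i)=\infty$.
\end{itemize}

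\textbf{Step 2: $x\le T$ for any fixed point $x$.} Take $i$ with $T(i)<\infty$ and a minimizing path $s=v_0,\dots,v_m=i$. By induction on $k$ we show $x(v_k)\le \sum_{l\le k}w_{v_{l-1}v_l}$. The base case is $x(v_0)=0$. For the inductive step, either $v_k\in S$, where $x(v_k)=0$ trivially satisfies the bound, or the fixed-point equation gives $x(v_k)=(Fx)(v_k)\le x(v_{k-1})+w_{v_{k-1}v_k}$. At $k=m$ this gives $x(i)\le T(i)$. When $T(i)=\infty$ the inequality is trivial.

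\textbf{Step 3: $x\ge T$ — the main obstacle.} Here a naive induction fails: there is no a priori ordering along which to induct for an arbitrary fixed point, and values $\infty$ must be excluded from creating spurious finite solutions. We argue by minimal counterexample. Suppose $B:=\{i: x(i)<T(i)\}$ is nonempty. Every $i\in B$ has $x(i)<\infty$. Since $V$ is finite, choose $i\in B$ minimizing $x(i)$.
\begin{itemize}
\item $i\notin S$, because on $S$ we have $x=T=0$.
\item By the fixed-point equation there is $j\sim i$ with $x(i)=x(j)+w_{ji}$. As $w_{ji}>0$, this gives $x(j)<x(i)$.
\item By Step 1, $T(i)\le T(j)+w_{ji}$. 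Hence $T(j)\ge T(i)-w_{ji}>x(i)-w_{ji}=x(j)$. This also covers $T(i)=\infty$, which forces $T(j)=\infty$.
\end{itemize}
So $j\in B$ with $x(j)<x(i)$, contradicting minimality. Hence $B=\emptyset$. Combined with Step 2 this gives $x=T$, proving existence and uniqueness of the fixed point in $X$.

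The components of $G$ without sources are handled automatically. On such a component $T\equiv\infty$, so $B$ is empty there exactly when $x\equiv\infty$, and Step 3 rules out any finite value. The delicate points are exactly this case and the strict positivity $w_{ji}>0$. With zero-weight cycles uniqueness would fail, so the write-up should point to where \eqref{eq:weight} guarantees $w_{ij}>0$.
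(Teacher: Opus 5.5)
Your proof is correct and follows the paper's decomposition. Existence comes from the two inequalities $T\le FT$ (path extension) and $T\ge FT$ (last edge of a shortest path); uniqueness comes from $x\le T$ (summing the fixed-point inequality along a path) and $x\ge T$ (strict positivity of $w$ makes $x$ strictly decrease toward a minimizing neighbour). The one difference is in $x\ge T$: the paper follows the greedy predecessor chain down to $S$ and reads off $x(i)$ as the cost of an actual path, while your minimal counterexample on $B=\{i: x(i)<T(i)\}$ uses the same strict-decrease-plus-finiteness idea non-constructively, and you also handle $\infty$ values and sources lying inside a path more explicitly than the paper does.
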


This establishes that the graph Eikonal activation problem is exactly a Bellman fixed-point problem, consistent with classical dynamic programming formulations~\cite{Bellman1958Routing, Dijkstra1959Graphs}.

\begin{theorem}[Comparison principle]
\label{thm:comparison}
Let $x\in X$ satisfy $\delta_{\mathrm{Bell}}(x)\le\delta$, and assume the greedy predecessor graph of~$x$ is acyclic. Then
\begin{equation}\label{eq:comparison}
  \|x - T\|_\infty \;\le\; m_{\max}\cdot\delta,
\end{equation}
where $m_{\max}$ is the maximum greedy depth of~$x$.
\end{theorem}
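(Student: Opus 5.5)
The plan is to prove the two one-sided bounds $T - x \le m_{\max}\delta$ and $x - T \le m_{\max}\delta$ separately. Each comes from unrolling the residual inequality $|x(i) - (Fx)(i)| \le \delta$ along a chain of predecessors, so that the per-step residuals telescope into a multiple of $\delta$. Throughout I assume $x$ is finite-valued; otherwise $\delta_{\mathrm{Bell}}(x)\le\delta<\infty$ already forces $x(i)=\infty$ exactly where $(Fx)(i)=\infty$, and such nodes can be handled separately.

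\textbf{Setup.} For $i\notin S$ let $p(i)$ be a greedy predecessor, i.e.\ a minimizer of $j\mapsto x(j)+w_{ji}$ over $j\sim i$, so that $(Fx)(i)=x(p(i))+w_{p(i)i}$. The hypothesis says that the map $i\mapsto p(i)$ on $V\setminus S$ has no cycles. Since $V$ is finite, iterating $p$ from any $i$ therefore reaches a source after finitely many steps $m(i)$, the greedy depth, and $m_{\max}=\max_i m(i)$. The resulting chain $s=i_{m}\to i_{m-1}\to\dots\to i_0=i$ is a genuine path in $G$ from a source to $i$.

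\textbf{Lower bound ($T-x\le m_{\max}\delta$).} Write the residual as $x(i)\ge (Fx)(i)-\delta = x(p(i))+w_{p(i)i}-\delta$. Apply this successively along the greedy chain and use $x(s)=0$ at the source. Summing the inequalities gives
\[
x(i)\;\ge\;\sum_{k=1}^{m(i)} w_{i_k i_{k-1}} \;-\; m(i)\,\delta .
\]
The sum is the length of a source-to-$i$ path, so by definition \eqref{eq:eikonal_time} it is at least $T(i)$. Hence $T(i)-x(i)\le m(i)\delta\le m_{\max}\delta$.

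\textbf{Upper bound ($x-T\le m_{\max}\delta$).} Now use the other side of the residual, $x(i)\le (Fx)(i)+\delta\le x(j)+w_{ji}+\delta$, which holds for \emph{every} neighbor $j$ of $i$. By Theorem~\ref{thm:bellman_fp}, $T=FT$, so each $i\notin S$ has a $T$-optimal predecessor $q(i)$ with $T(i)=T(q(i))+w_{q(i)i}$. Choosing $j=q(i)$ and subtracting gives
\[
x(i)-T(i)\;\le\;x(q(i))-T(q(i))+\delta .
\]
Iterating down the shortest-path tree of $T$ yields $x(i)-T(i)\le h(i)\,\delta$, where $h(i)$ is the hop depth of $i$ in that tree.

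\textbf{Main obstacle.} The difficulty is replacing the hop depth $h(i)$ of the $T$-tree by the greedy depth $m(i)$ of $x$, since the two predecessor structures need not coincide. I would first try an induction on nodes ordered by increasing $x$, combining the two one-sided inequalities to show that the excess $x-T$ can grow by at most $\delta$ per greedy step. If that fails, the clean fallback is to read $m_{\max}$ as the maximum of the greedy depth of $x$ and the hop depth of a shortest-path tree of $T$; for an extracted activation field with a near-consistent predecessor structure these two depths agree anyway. Combining the two one-sided bounds then gives \eqref{eq:comparison}.
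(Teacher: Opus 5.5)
Both of your one-sided estimates are correct, and your lower bound $T(i)-x(i)\le m(i)\delta$ is exactly the paper's greedy-chain telescoping. The obstacle you flag in the upper bound cannot be removed, however, because the statement is false as written.

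Take $V=\{s,u,i\}$, $S=\{s\}$, $v\equiv 1$, and edges $\{s,u\},\{u,i\},\{s,i\}$ with $w_{su}=w_{ui}=1$ and $w_{si}=5/2$, so that $T(u)=1$ and $T(i)=2$. Put $x(s)=0$, $x(u)=2$, $x(i)=7/2$. Then $(Fx)(u)=\min\{0+1,\,7/2+1\}=1$ and $(Fx)(i)=\min\{0+5/2,\,2+1\}=5/2$, so $\delta_{\mathrm{Bell}}(x)=1=:\delta$. Both minima are attained only at $s$, so the greedy predecessor graph is the acyclic star $s\to u$, $s\to i$, and $m_{\max}=1$. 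Yet $\|x-T\|_\infty=x(i)-T(i)=3/2>m_{\max}\delta$.

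Your planned induction must therefore fail. With $e=x-T$ and $p=\pi_x(i)$ one only has $e(i)-e(p)\le\delta+\bigl(w_{pi}+T(p)-T(i)\bigr)$. The bracket is the $T$-slack of the greedy edge (here $1/2$), and $\delta$ does not control it. Nor is this a constant-factor issue. Take a unit-weight path $s=u_0,\dots,u_n$ with extra edges $s$--$u_k$ ($k\ge 2$) of weight $W_k=1+(k-1)(1+\delta-\eta)$, where $0<\eta<\delta$, and set $x(u_k)=W_k+\delta$ (with $W_1:=1$). This field has $\delta_{\mathrm{Bell}}(x)=\delta$ and $m_{\max}=1$, but $x(u_n)-T(u_n)=n\delta-(n-1)\eta$.

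The paper's proof contains the same hole, hidden in one sentence. Its telescoping along the greedy chain correctly gives $\bigl|x(i)-\sum_k w_{v_{k-1}v_k}\bigr|\le m(i)\delta$. But the chain cost is only bounded below by $T(i)$, so this yields your lower bound and nothing more. The appeal to ``the same telescoping argument as in the proof of Theorem~\ref{thm:bellman_fp}'' for the other direction does not go through. The $x\le T$ half of that argument runs along a shortest path and produces your hop depth $h(i)$, not $m(i)$.

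Your fallback is therefore the right repaired statement: $-m(i)\delta\le x(i)-T(i)\le h(i)\delta$, hence $\|x-T\|_\infty\le\max\{m_{\max},h_{\max}\}\,\delta$. This also justifies the $(N-1)\delta$ form in the paper's remark, since shortest paths are simple and so $h(i)\le N-1$. Alternatively, the greedy-depth bound does hold under the extra hypothesis that every greedy edge is $T$-tight, $T(i)=T(\pi_x(i))+w_{\pi_x(i)i}$. Then each greedy chain is a shortest path and its cost is exactly $T(i)$.

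Your comment that the two depths ``agree anyway'' for extracted activation fields is a heuristic about the data. It is not a consequence of the stated hypotheses.
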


% The greedy predecessor graph is formed by directed edges $(\pi_x(i)\to i)$, where $\pi_x(i)\in\argmin_{j\sim i}(x(j)+w_{ji})$. Its acyclicity can be verified algorithmically in linear time. The greedy depth~$m_{\max}$ is an effective hop-diameter of the time ordering and is typically much smaller than the worst-case bound~$N-1$; in our experiments on random geometric graphs, the tightening factor reached 37--54$\times$ (Section~\ref{sec:results_certificate}).

Theorem~\ref{thm:comparison} is the core certificate: if a candidate activation field has small Bellman residual, it is uniformly close to the Eikonal time. This applies to any candidate field, whether produced by a reaction--diffusion solver, a reaction--eikonal surrogate, or any other method.

\subsubsection{Affine calibration}

In practice, simulator-produced activation times~$\tau$ are often related to the Eikonal time not by $\tau\approx T$ but by $\tau\approx\alpha T+\beta$, where $\alpha>0$ captures a global time-scale mismatch and~$\beta$ a source-time offset. To handle this, we define the affine-calibrated Bellman residual
\begin{equation}\label{eq:affine_residual}
  \delta_{\mathrm{affBell}}(\tau;\alpha,\beta)
  \;:=\;
  \bigl\|(\tau-\beta) - F_{\alpha w}(\tau-\beta)\bigr\|_\infty,
\end{equation}
where $F_{\alpha w}$ is the Bellman operator with scaled weights~$\alpha w_{ij}$. An affine comparison principle analogous to Theorem~\ref{thm:comparison} holds: if $\delta_{\mathrm{affBell}}$ is small, then $\|\tau-(\alpha T+\beta)\|_\infty$ is controlled (Supplementary Material, Section~S1). The calibration parameters $(\alpha,\beta)$ are estimated by least-squares regression of~$\tau$ against~$T$. In our heart-graph experiments, affine calibration improved the residual and yielded $R^2 = 0.967$ (Section~\ref{sec:results_certificate}).

% ----------------------------------------------------------------------
\subsection{Two solver backends: \textrm{ET} and \textrm{RE}}
\label{sec:solvers}
% ----------------------------------------------------------------------

We consider two forward models on the unified heart graph, sharing the same forward pipeline. They differ only in how they generate the transmembrane potential~$V_m$.

\subsubsection{ET: Eikonal-template baseline}

The Eikonal-template model~(\textrm{ET}) is a fast baseline built on a shortest-path activation field~$T$, computed using a Dijkstra-type method~\cite{Dijkstra1959Graphs, vanDam2009ActivationRecovery}. At time~$t$, the transmembrane signal at node~$i$ is represented by a prescribed action-potential template shifted in time by the local activation time~$T_i$:
\begin{equation}\label{eq:et}
  V_m(i,t) \;=\; A\!\bigl(t - T_i\bigr).
\end{equation}
The resulting field is passed to the common postprocessing chain to produce $\phi_e$ and the 12-lead ECG.

\textrm{ET} is fast and well suited for activation-oriented parameter sweeps. Its limitation is that recovery behavior is inherited from the chosen template: QT and T-wave features are not controlled independently of the activation field.

\subsubsection{RE: Adopted reaction--eikonal backend}

The reaction--eikonal model~(\textrm{RE}) keeps the same Eikonal activation backbone~$T$ but replaces the template shift with triggered ionic dynamics and spatial pseudo-diffusion, adapting the reaction--eikonal framework of Neic et~al.~\cite{Neic2017ReactionEikonal} to the graph setting with a simplified scalar coupling parameter. The evolution of the dimensionless voltage~$V_i$ at each node~$i$ is
\begin{equation}\label{eq:re}
  \frac{dV_i}{dt}
  \;=\;
  \underbrace{I_{\mathrm{ion}}^{\,\mathrm{tissue}(i)}(V_i,g_i)}_{\text{ionic model}}
  \;+\;
  \underbrace{B_i(t;\,T_i)}_{\text{trigger current}}
  \;-\;
  \underbrace{\kappa\!\sum_{j\sim i}\bar\sigma_{ij}\bigl(V_i-V_j\bigr)}_{\text{pseudo-diffusion}},
\end{equation}
with companion gating dynamics $dg_i/dt = h^{\mathrm{tissue}(i)}(V_i,g_i)$. Here $I_{\mathrm{ion}}$ and~$h$ are determined by a tissue-specific ionic model (Aliev--Panfilov~\cite{AlievPanfilov1996} or Bueno-Orovio~\cite{BuenoOrovio2008}), and $\bar\sigma_{ij}=\tfrac{1}{2}(\sigma_i+\sigma_j)$ is the averaged intracellular conductivity on edge~$(i,j)$.

The trigger current~$B_i(t;T_i)$ is a short pulse that initiates the local action potential at the time dictated by the Eikonal field~$T_i$, so the Eikonal backbone controls when each node depolarizes, while the ionic model controls the local waveform shape.

The pseudo-diffusion term couples neighboring nodes through a graph Laplacian restricted to myocardial edges, with scalar strength~$\kappa\ge 0$. When $\kappa=0$, the coupling vanishes and the model reduces to a set of independent triggered ODEs. When $\kappa>0$, neighboring repolarization fields are spatially coupled, smoothing recovery gradients and broadening the T-wave. Conduction-system and atrial edges are excluded so that the pseudo-diffusion does not distort the activation backbone.

The original reaction--eikonal model of Neic et~al.~\cite{Neic2017ReactionEikonal} operates on a finite-element mesh with a full monodomain diffusion operator. Our adaptation simplifies this to a graph Laplacian with a single scalar~$\kappa$, which makes the coupling strength directly interpretable as a morphology control parameter (Section~\ref{sec:knob_taxonomy}).

% Because \textrm{ET} and \textrm{RE} share the same activation backbone and the same forward pipeline, differences in ECG morphology can be attributed to the recovery layer rather than to changes in activation timing or postprocessing.

% ----------------------------------------------------------------------
\subsection{Knob taxonomy}
\label{sec:knob_taxonomy}
% ----------------------------------------------------------------------

The simulator parameters are organized into activation and recovery controls. We use this taxonomy as a testable knob-to-feature map: activation controls are expected to affect activation and QRS-related features, whereas recovery controls are expected to affect QT/T-wave features. The atlas and sensitivity experiments then test whether this separation is preserved when the recovery-aware \textrm{RE} backend is added.

\subsubsection{Activation controls}

\begin{equation}\label{eq:theta_act}
  \Theta_{\mathrm{act}}
  \;=\;
  \bigl\{\,
    \sigma_{\mathrm{purk,L}},\;
    \sigma_{\mathrm{purk,R}},\;
    \sigma_{\mathrm{AV}},\;
    \sigma_{\mathrm{LA/RA}},\;
    \sigma_{\mathrm{annulus}}
  \bigr\}.
\end{equation}

The activation controls modify conduction pathways on the shared heart graph. Left and right Purkinje conductivities ($\sigma_{\mathrm{purk,L}}$, $\sigma_{\mathrm{purk,R}}$) primarily affect ventricular activation timing, QRS morphology, and bundle-branch-block-like responses~\cite{Surawicz2009IVCD}. AV conductivity ($\sigma_{\mathrm{AV}}$) modulates atrioventricular delay, while the atrial transmural conductivity family ($\sigma_{\mathrm{LA/RA}}$) affects P-wave morphology. The annulus leak parameter ($\sigma_{\mathrm{annulus}}$) controls residual conduction across the fibrous ring~\cite{Saremi2017FibrousSkeleton}.

\subsubsection{Recovery controls}

\begin{equation}\label{eq:theta_rep}
  \Theta_{\mathrm{rep}}
  \;=\;
  \bigl\{\,
    \varepsilon_{0,\mathrm{endo}},\;
    \varepsilon_{0,\mathrm{epi}},\;
    \kappa
  \bigr\}.
\end{equation}

The recovery controls modify repolarization after the activation backbone has triggered local depolarization. The endo-epicardial APD gradient, controlled by the relative values of $\varepsilon_{0,\mathrm{endo}}$ and $\varepsilon_{0,\mathrm{epi}}$, is treated as the primary determinant of the admissible T-wave morphology window and QT/T behavior~\cite{Roden2008KeepQT,
Antzelevitch2007TpeakTend}. A global multiplicative scaling of $\varepsilon_0$ is used as an experimental perturbation to shift QT/T timing and amplitude, but is not treated as a standalone morphology-quality knob. The pseudo-diffusion strength~$\kappa$ (\textrm{RE} only) is interpreted as a secondary smoothing and modulation control: it reduces local repolarization roughness with little QTc displacement, rather than serving as the primary determinant of T-wave polarity or QT duration.

This activation-recovery separation is empirical rather than absolute. Section~\ref{sec:results_sensitivity} therefore quantifies normalized knob-feature coupling and a block disentanglement index for the activation and recovery controls summarized here.

% ----------------------------------------------------------------------
\subsection{Diagnostics-aware curation pipeline}
\label{sec:curation}
% ----------------------------------------------------------------------

Random parameter sampling can produce ECGs that are numerically stable but physiologically implausible. Each simulation is therefore evaluated by a two-stage diagnostics pipeline: fixed metric computation followed by experiment-specific acceptance. This separation allows the same diagnostics infrastructure to support broad recovery screening, recovery-gradient analysis, sensitivity analysis, and final multi-lead morphology curation.

\subsubsection{Stage 1: Metric computation}

The diagnostics vector contains four metric classes: activation-field diagnostics, ECG morphology and interval metrics, recovery scores, and morphology-coverage features.

\paragraph{Activation-field diagnostics}
Activation-field diagnostics test whether the simulated activation wavefront follows the intended conduction order~\cite{Durrer1970TotalExcitation}. These diagnostics include sequence reversal checks along the expected $\text{SA}\to\text{atrial}\to\text{AV}\to\text{His}\to\text{Purkinje}\to\text{ventricular}$ pathway, endo-epicardial ordering violations, large unexplained activation gaps, and causal predecessor consistency. Samples with gross activation-order failures are eligible for unconditional rejection under Stage~2.

\paragraph{ECG morphology and interval metrics}
ECG morphology and interval metrics are extracted from the simulated 12-lead ECG using a common detector~\cite{Kligfield2007ECGStandardization, Rautaharju2009STTQT}. Because the simulator provides latent activation and recovery fields in addition to the observable ECG trace, interval metrics are computed from a hybrid diagnostic representation: QRS timing is anchored by ventricular activation times and refined against ECG waveform landmarks, PR-related measures use atrial-to-ventricular activation timing, and QT/T-wave measures combine recovery-time summaries with lead-wise T-wave landmarks. This gives the curation pipeline access to mechanism-aware intervals that are not directly available to purely waveform-generative models; detailed metric definitions are provided in the Supplementary Material. Waveform metrics include lead-wise P-wave amplitude, QRS amplitude, T-wave amplitude, selected polarity checks, and gross morphology-failure flags such as flatline-like traces, global QRS inversion, missing QRS complexes, and implausible wave ordering. Because timing contexts differ across display, atlas, and curation analyses, interval values are interpreted within each experiment rather than pooled as absolute clinical measurements.

\paragraph{Recovery plausibility and repolarization scores}
Recovery plausibility and repolarization scores are internal simulation-quality scores for repolarization behavior. The recovery plausibility score, denoted $s_{\mathrm{rep}}$, is the policy-level screening score; it summarizes recovery-gradient consistency, local repolarization smoothness, and T-wave morphology into a scalar criterion. A related repolarization score summarizes local recovery quality, especially smoothness of neighboring repolarization times, and is reported as a mechanism-oriented diagnostic rather than as the primary acceptance score. These scores are used to rank and filter parameter samples, but they are not intended to replace clinical repolarization biomarkers.

\paragraph{Feature-space coverage}
Feature-space coverage is used in the final curation experiment to quantify morphology diversity among accepted samples. Selected ECG features are discretized into bins, and coverage is defined as the fraction of occupied bins in the admissible feature space. Per-model coverage compares \textrm{ET} and \textrm{RE} under the same sampling budget, whereas pooled coverage summarizes the combined accepted set.

\subsubsection{Stage 2: Policy-based acceptance}

The computed metrics are compared with acceptance rules that depend on the experimental endpoint. Shared hard filters, such as gross activation-order failure, global QRS inversion, missing QRS complexes, and flatline-like traces, lead to unconditional rejection. Softer constraints, including $s_{\mathrm{rep}}$ thresholds, timing bounds, T-wave polarity requirements, and feature-balance criteria, are policy-specific. E2 uses a broad recovery-score screen for throughput estimation, E4 uses recovery-specific screens, E5 is not acceptance-driven, and E6 uses the final balanced multi-lead curation policy. The experiment-level assignments are summarized in Table~\ref{tab:experiment_overview}, and the diagnostic quantities used by these policies are defined in the Supplementary Material.

The Bellman residual from Section~\ref{sec:theory} is related to this pipeline through the activation-field diagnostics: the sequence, gap, and predecessor checks are practical implementations of the activation-consistency idea formalized by the certificate.

% ----------------------------------------------------------------------
\subsection{Experimental design overview}
\label{sec:exp_overview}
% ----------------------------------------------------------------------

All experiments share the same baseline configuration: a unified heart graph with tissue-specific Aliev--Panfilov parameters, separate intracellular conductivities per tissue type, fixed torso geometry and lead placement, and common solver settings ($\Delta t = 0.129$\,ms).

The experiments are organized in two groups:

\paragraph{Certificate validation (Section~\ref{sec:results_certificate}).}
We test the Bellman residual certificate on toy graphs (chain, grid, random geometric) and the heart graph, and verify stability under time-step variation.

\paragraph{ECG synthesis validation (Sections~\ref{sec:results_activation}--\ref{sec:results_curation}).}
We test the \textrm{ET} and \textrm{RE} backends through the six experiments summarized in Table~\ref{tab:experiment_overview}.

\begin{table}[t]
  \centering
  \caption{Overview of ECG synthesis experiments.}
  \label{tab:experiment_overview}
  \footnotesize
  \begin{tabular}{@{}p{1.1cm}p{3.0cm}p{3.0cm}p{4.8cm}@{}}
    \toprule
    \textbf{Exp.} & \textbf{Purpose} & \textbf{Varied factors} & \textbf{Primary endpoint} \\
    \midrule
    E1 & Baseline ECG morphology & None; tuned baseline & Stable P-QRS-T morphology in \textrm{ET} and \textrm{RE} \\
    E2 & Throughput screening & Random parameter samples & Acceptance under a broad recovery-score policy \\
    E3 & Activation atlas & AV, annulus, and atrial-transmural parameters & Relative activation knob-response trends \\
    E4 & Recovery atlas & $\epsilon_0$, $\kappa$, and endo-epi APD gradient & QT/T response and admissible recovery-gradient window \\
    E5 & Sensitivity analysis & Local perturbations of activation and recovery knobs & Knob-feature coupling and activation/recovery disentanglement \\
    E6 & Final morphology curation & Random parameter samples & Balanced multi-lead morphology yield and feature-space coverage \\
    \bottomrule
  \end{tabular}
\end{table}

Individual experiments vary only the parameters stated in their respective descriptions; all other values remain at the baseline defaults.

% ----------------------------------------------------------------------
\subsection{Downstream low-label P/QRS/T delineation}
\label{sec:methods_downstream}
% ----------------------------------------------------------------------

To test whether the mechanistically generated ECGs provide a useful initialization signal beyond internal curation metrics, we performed a low-label P/QRS/T delineation evaluation. This experiment is treated as an application-oriented utility assessment rather than as a full clinical delineation benchmark.

Synthetic pretraining data were generated from curated single-beat ECGs by varying heart rate, PR timing, QRS duration, QT timing, waveform amplitude, and lead morphology, then concatenating beats into 12-s clean multi-lead traces. The learning target was a common timeline segmentation mask with four classes: background, P wave, QRS complex, and T wave. Evaluation was performed on Lead~I, Lead~II, and V1.

We compared three training conditions under identical real-data splits:
(1) a real-only training baseline from scratch;
(2) random synthetic pretraining followed by real-data fine-tuning; and
(3) parameter-informed synthetic pretraining followed by real-data fine-tuning.
Rather than sampling simulation knobs purely at random, the parameter-informed setting uses a curated set of 12 synthetic parameter templates selected to span biophysically motivated normal sinus rhythm (NSR), left bundle branch block (LBBB), and right bundle branch block (RBBB) morphologies (Figure~\ref{fig:templates_12}). This process generated 2,000 synthetic multi-beat traces from the representative templates, serving as targeted parameter-informed augmentation without fitting clinical waveforms during training. The conduction-morphology variation in these templates was intended to expose the segmentation model to controlled P/QRS/T timing and shape diversity during pretraining.

The available real training pool was subsampled at 1\% (56 records), 2\% (111 records), 5\% (277 records), and 10\% (555 records), with four subset seeds per fraction. Model selection used the validation split, and all reported metrics were computed on the held-out PTB-XL+ test split. The primary metrics were P-wave, QRS-complex, and T-wave IoU.

\begin{figure}[t]
  \centering
  \includegraphics[width=0.98\linewidth]{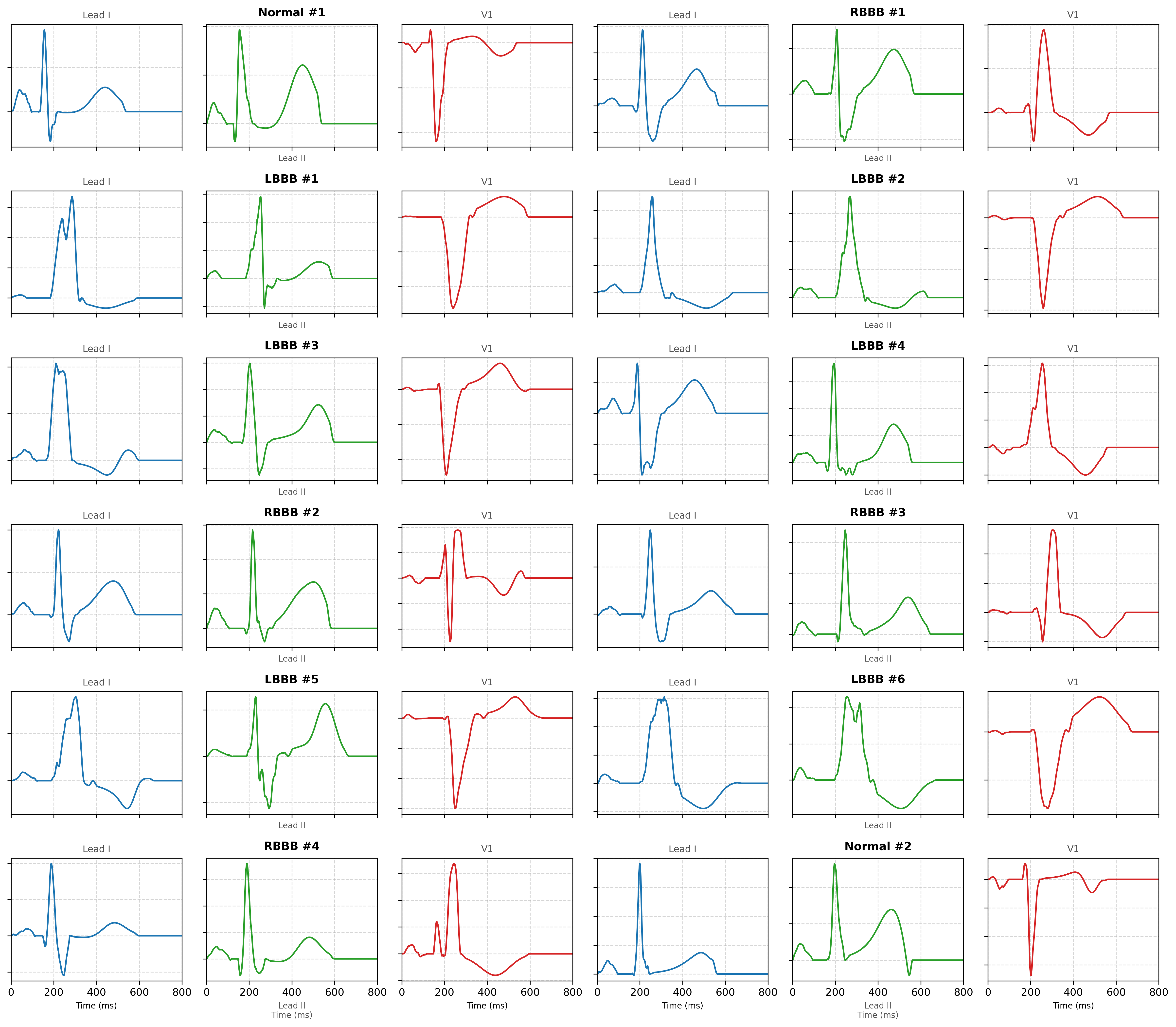}
  \caption{Synthetic parameter templates used for parameter-informed pretraining. The 12 templates span biophysically motivated NSR, LBBB, and RBBB-like conduction morphologies, shown here in Leads~I, II, and V1. These waveforms are intended to provide controlled P/QRS/T timing and shape variation for pretraining, rather than diagnostic exemplars of clinical bundle-branch block.}
  \label{fig:templates_12}
\end{figure}

% ======================================================================
\section{Results}
\label{sec:results}
% ======================================================================

This section evaluates the graph-based ECG synthesis framework in two stages. First, we test whether activation times extracted from the reaction-eikonal backend remain consistent with the certified eikonal activation backbone. Second, using a single tuned baseline for all ECG synthesis experiments, we evaluate whether the same graph and ECG forward model support stable baseline morphology, interpretable activation and recovery parameter responses, and diagnostics-aware synthetic ECG curation. Score definitions, morphology criteria, and policy assignments are described in Methods; this section reports the resulting behavior under those policies.

% ----------------------------------------------------------------------
\subsection{Bellman certificate validation}
\label{sec:results_certificate}
% ----------------------------------------------------------------------

\paragraph{Toy and heart-graph certificate checks}
We first evaluated the Bellman residual certificate on controlled graph examples. In 50 chain-graph perturbation scenarios, generated by modifying edge geometry or introducing junction delays, the Bellman residual remained bounded by the edge mismatch in every case. No predecessor cycles were detected. These tests verify the basic comparison principle under controlled conditions.

On the cardiac graph, the reference eikonal activation time was computed on the tuned graph with 1321 nodes and 4546 edges. All nodes were reachable from the source set, and the exact eikonal solution satisfied the Bellman fixed-point equation with zero residual. We then compared this eikonal backbone with activation times extracted from the reaction-eikonal simulation. To avoid spurious predecessor cycles caused by near-simultaneous phase-0 detections, activation times were extracted using a phase-0 start marker with a dV/dt threshold of 5 and evaluated with a causal predecessor filter.

With the causal predecessor map, the extracted activation field remained close to the eikonal backbone: the Bellman residual was $0.9258$ ms, the sup-norm activation error was $0.9400$ ms, and the linear fit achieved $R^2=0.99876$. The greedy predecessor depth was 22, giving a tightened certificate bound of $20.37$ ms, and the causal predecessor graph contained no cycles.

\paragraph{Time-step scaling}
Time-step scaling confirmed that the certificate behavior was stable over the tested range. For $dt=\{0.025,0.05,0.1,0.2\}$ ms, the activation error ranged from $0.975$ to $1.214$ ms, and $R^2$ remained between $0.99831$ and $0.99860$ (Table~\ref{tab:dt_scaling_new}). No causal predecessor cycles occurred at any tested time step, supporting stability of the extracted activation ordering.

\begin{table}[t]
  \centering
  \caption{Causal Bellman certificate diagnostics under time-step variation.}
  \label{tab:dt_scaling_new}
  \small
  \begin{tabular}{@{}lcccc@{}}
    \toprule
    $dt$ (ms) & $\delta_{\mathrm{Bell}}$ & $e_\infty$ (ms) & $R^2$ & cycles \\
    \midrule
    0.025 & 0.9608 & 0.9749 & 0.99860 & 0 \\
    0.050 & 0.9358 & 0.9999 & 0.99860 & 0 \\
    0.100 & 1.0163 & 1.0998 & 0.99835 & 0 \\
    0.200 & 1.0358 & 1.2140 & 0.99831 & 0 \\
    \bottomrule
  \end{tabular}
\end{table}

Together, these results support the use of the eikonal activation field as a certificate-checked activation backbone for the ECG synthesis experiments. The Bellman residual does not provide a tight numerical error bound in this setting, but it acts as a computable consistency certificate for the extracted activation ordering.

% ----------------------------------------------------------------------
\subsection{Baseline morphology and activation atlas}
\label{sec:results_activation}
% ----------------------------------------------------------------------

\paragraph{Baseline ECG morphology}
All ECG synthesis experiments were performed using a single tuned baseline selected to produce stable P-QRS-T morphology in Lead~I, Lead~II, and V1. The baseline was accepted for both ET and RE, supporting its use as a common morphology anchor for the atlas, sensitivity, and curation experiments (Table~\ref{tab:baseline_sanity}). The corresponding baseline ECG overlay is shown in Figure~\ref{fig:baseline_ecg_overlay}.

\begin{figure}[t]
  \centering
  \includegraphics[width=0.98\linewidth]{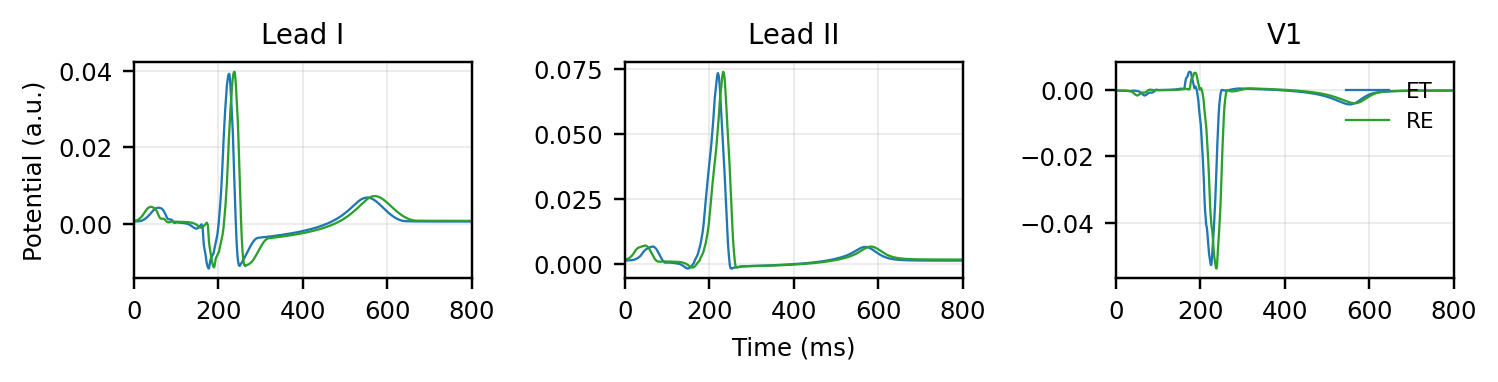}
  \caption{Baseline ECG overlay for ET and RE.}
  \label{fig:baseline_ecg_overlay}
\end{figure}

\begin{table}[t]
  \centering
  \caption{Baseline ECG morphology summary.}
  \label{tab:baseline_sanity}
  \begingroup
  \setlength{\tabcolsep}{4pt}
  \footnotesize
  \begin{tabular}{@{}lccccc@{}}
    \toprule
    Model & PR & QRS & QTc & P amp. (II) & T amp. (II) \\
    \midrule
    ET & 148.06 & 82.78 & 352.93 & 0.00375 & 0.00594 \\
    RE & 142.02 & 82.78 & 354.36 & 0.00520 & 0.00605 \\
    \bottomrule
  \end{tabular}
  \endgroup
\end{table}

\paragraph{Activation atlas}
The activation atlas varied one activation-side knob at a time around the common baseline: AV conductivity and annulus leak were used to probe PR/atrioventricular timing responses, whereas the left-atrial transmural parameter was used to probe P-wave morphology with minimal ventricular timing change. Across these sweeps, RE tracked ET closely in display-calibrated PR timing, QRS range, P-wave amplitude, and QRS amplitude (Table~\ref{tab:activation_atlas_summary}). This supports the interpretation that adding pseudo-diffusion reaction--eikonal recovery dynamics preserves the activation knob-response behavior established by the eikonal baseline.

\begin{table}[t]
  \centering
  \caption{Activation-atlas summary.}
  \label{tab:activation_atlas_summary}
  \begingroup
  \setlength{\tabcolsep}{4pt}
  \footnotesize
  \begin{tabular}{@{}llcccc@{}}
    \toprule
    Knob & Model & PR & QRS & P amp. & QRS amp. \\
    \midrule
    AV cond. & ET & 174.84--294.37 & 82.78--82.78 & 0.0037 & 0.0729 \\
    AV cond. & RE & 168.64--288.17 & 82.78--83.16 & 0.0052 & 0.0728 \\
    Annulus leak & ET & 145.12--386.46 & 82.78--82.78 & 0.0037 & 0.0725 \\
    Annulus leak & RE & 138.92--380.43 & 82.78--82.78 & 0.0052 & 0.0724 \\
    LA transmural & ET & 148.06--148.28 & 82.78--82.78 & 0.0037 & 0.0724 \\
    LA transmural & RE & 141.80--142.02 & 82.78--82.78 & 0.0050 & 0.0729 \\
    \bottomrule
  \end{tabular}
  \endgroup
\end{table}

% ----------------------------------------------------------------------
\subsection{Recovery atlas}
\label{sec:results_recovery}
% ----------------------------------------------------------------------

The recovery atlas varied recovery-side parameters one at a time around the same baseline: diffusion strength $\kappa$ tested local repolarization smoothing, global $\varepsilon_0$ scaling tested QT/T timing shifts, and the endo-epicardial APD gradient tested the admissible T-wave morphology window. A mechanistic recovery screen evaluated how these parameters shape QT/T behavior, whereas a stricter endo-epi gradient sweep identified the admissible morphology window. We denote the recovery plausibility score by $s_{\mathrm{rep}}$ throughout. Together, these experiments show that the endo-epicardial APD gradient primarily determines the acceptable T-wave morphology window, while $\kappa$ provides secondary smoothing of repolarization without substantially shifting QT timing.

In the mechanistic sweep, increasing $\kappa$ improved $s_{\mathrm{rep}}$ from 42.25 at $\kappa=0$ to 47.13 at $\kappa=0.125$, while QTc remained approximately 354 ms. The 95th percentile neighbor difference in repolarization time decreased from 20.13 ms to 16.71 ms, consistent with a smoothing effect. Global $\varepsilon_0$ scaling produced the expected timing response: lower scale prolonged QTc and increased Lead II T amplitude, whereas higher scale shortened QTc and reduced T amplitude. However, global scaling alone did not provide a strong
morphology-quality control.

The strict gradient sweep accepted 4/9 ET samples and 5/9 RE samples. The best $s_{\mathrm{rep}}$ was 92.61 for ET and 93.42 for RE (Table~\ref{tab:recovery_atlas_summary}). Physiologically interpretable positive-T cases were concentrated at gradients $+0.004$, $+0.008$, $+0.013$, and $+0.018$. The RE case at $-0.007$ passed the strict recovery policy but was excluded from the positive-T interpretation because that policy did not explicitly require positive Lead II T polarity. The recovery-atlas summary is reported in Table~\ref{tab:recovery_atlas_summary}.

\begin{table}[t]
  \centering
  \caption{Recovery-atlas summary.}
  \label{tab:recovery_atlas_summary}
  \begingroup
  \setlength{\tabcolsep}{4pt}
  \footnotesize
  \begin{tabular}{@{}lccc@{}}
    \toprule
    Component & ET & RE & Metric \\
    \midrule
    $\kappa$ & -- & 5/9 & $s_{\mathrm{rep}}$ 42.25--47.13 \\
    Global $\varepsilon_0$ & 0/7 & 1/7 & QTc 336--387 ms \\
    Endo-epi gradient & 4/9 & 5/9 & $+0.004$--$+0.018$ \\
    Best $s_{\mathrm{rep}}$ & 92.61 & 93.42 & RE higher \\
    \bottomrule
  \end{tabular}
  \endgroup
\end{table}

% ----------------------------------------------------------------------
\subsection{Sensitivity and disentanglement}
\label{sec:results_sensitivity}
% ----------------------------------------------------------------------

We quantified local knob-feature coupling using normalized sensitivity matrices and a block disentanglement index. Both models showed strong separation between activation-sensitive controls and recovery-sensitive features. The disentanglement index was 0.9987 for ET and 0.9985 for RE, indicating that adding pseudo-diffusion reaction--eikonal recovery dynamics did not collapse the activation/recovery interpretability of the parameterization.

The normalized sensitivity maps showed that Purkinje parameters dominated activation-side timing features, whereas $\varepsilon_0$ parameters dominated QT, T-wave amplitude, and $T_{\mathrm{peak}}$--$T_{\mathrm{end}}$. The added $\kappa$ parameter in RE had a small and localized influence on recovery features. This weak $\kappa$ response is consistent with the feature definition: the sensitivity map summarizes global ECG intervals and amplitudes, whereas $\kappa$ mainly affects local repolarization smoothness, as quantified in the recovery atlas (Figure~\ref{fig:sensitivity_maps}).

\begin{figure}[t]
  \centering
  \begin{subfigure}[t]{0.48\linewidth}
    \centering
    \includegraphics[width=\linewidth]{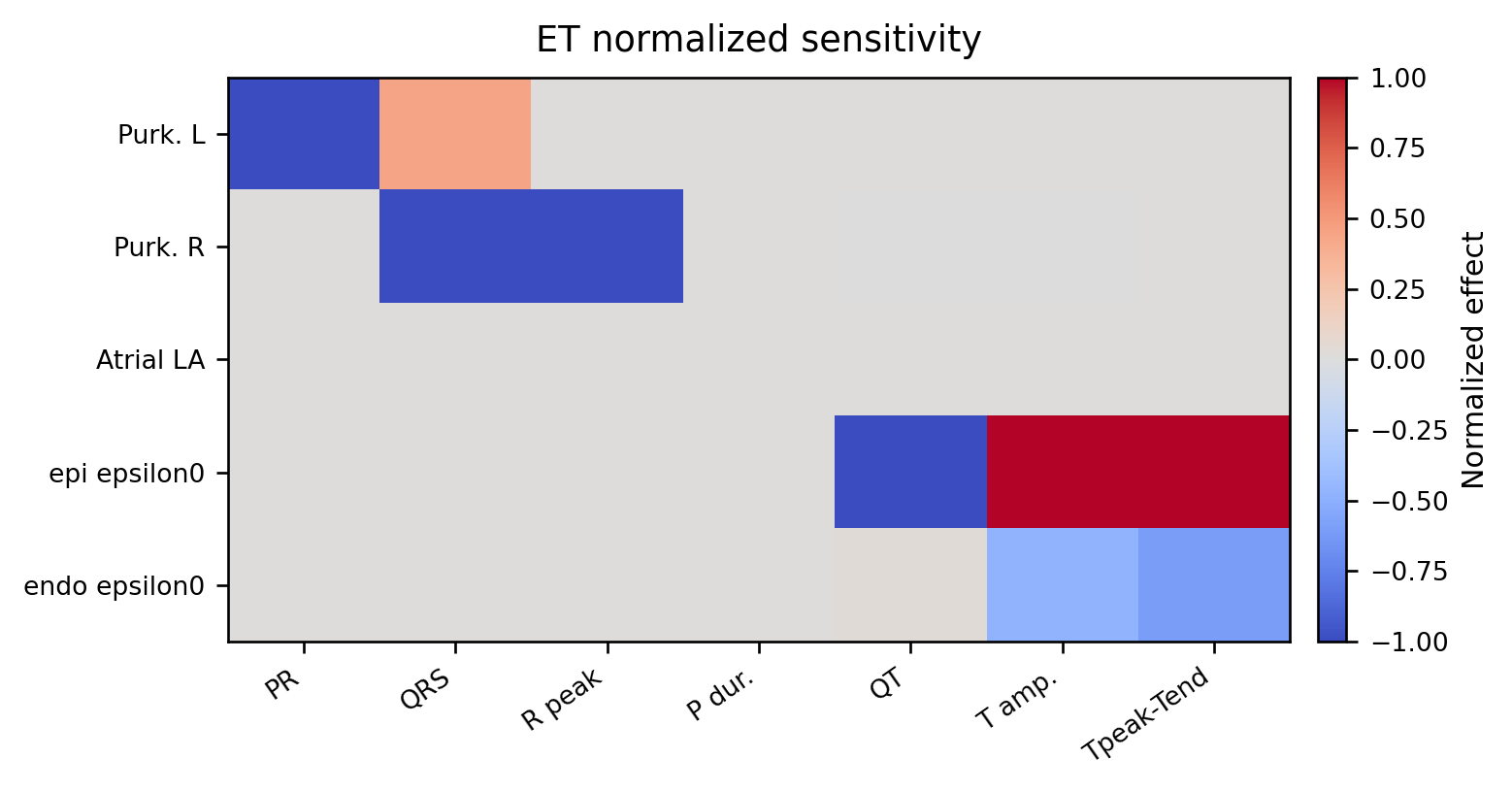}
    \caption{ET.}
  \end{subfigure}
  \hfill
  \begin{subfigure}[t]{0.48\linewidth}
    \centering
    \includegraphics[width=\linewidth]{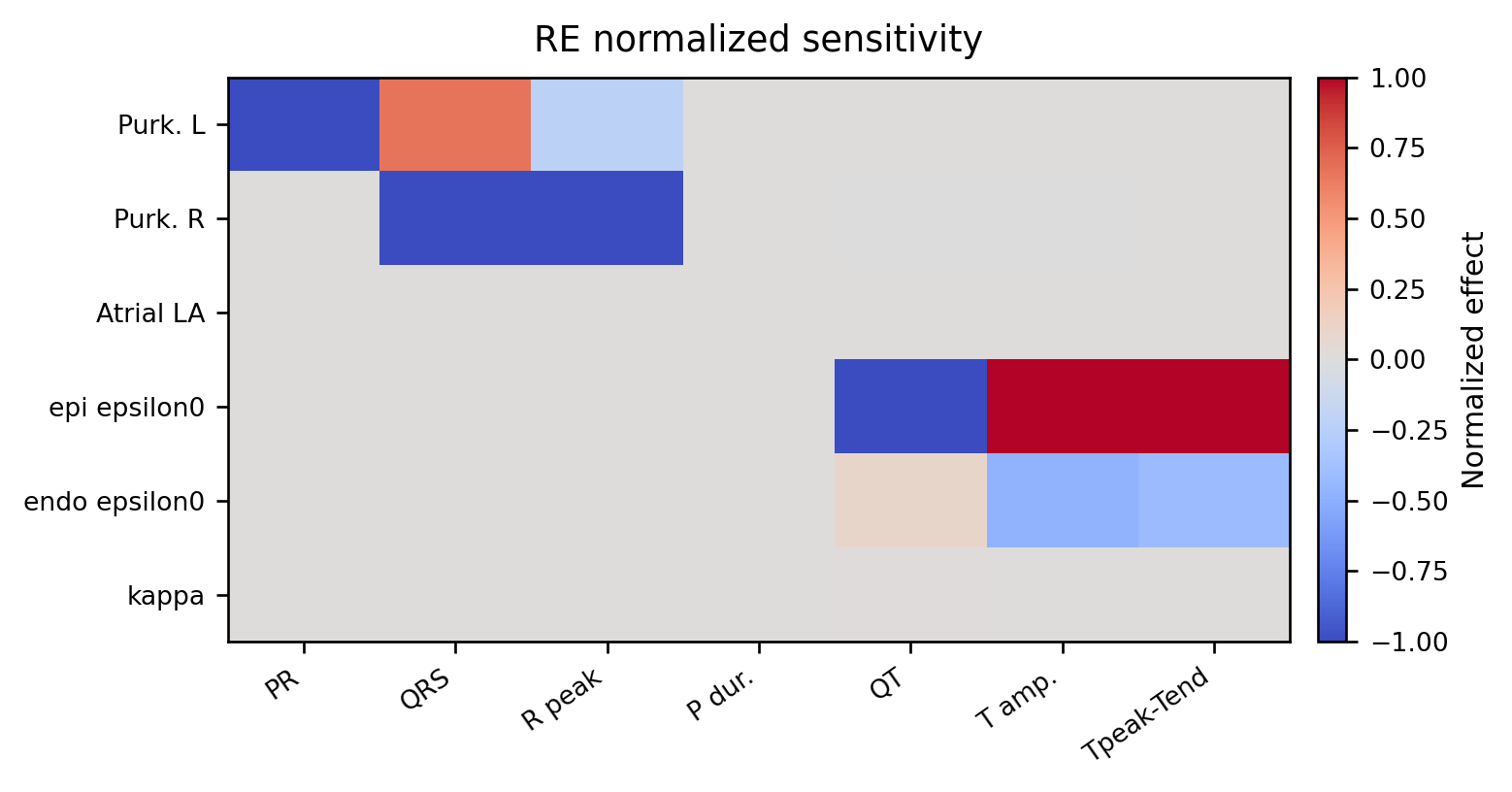}
    \caption{RE.}
  \end{subfigure}
  \caption{Normalized local knob-feature sensitivity maps. Purkinje controls dominate activation timing, $\varepsilon_0$ controls QT/T-wave features, and the weak RE $\kappa$ row reflects its mainly local smoothing effect.}
  \label{fig:sensitivity_maps}
\end{figure}

% ----------------------------------------------------------------------
\subsection{Diagnostics-aware throughput and curation}
\label{sec:results_curation}
% ----------------------------------------------------------------------

Finally, we evaluated the framework as a diagnostics-aware synthetic ECG generator. E2 was used as a recovery-score throughput screen with a higher recovery-plausibility threshold, whereas E6 used the balanced multi-lead morphology curation policy with an explicit Lead~II T-polarity gate and a lower recovery-score threshold. This distinction is important: E2 estimates throughput under a recovery-score screen, while E6 applies the curation policy used for morphology coverage claims.

In E2, ET accepted 200/1000 samples (20.0\%), whereas RE accepted 265/1000 samples (26.5\%). The rejection profile was dominated by the balanced recovery score. The lower E2 acceptance rate relative to E6 therefore reflects the higher recovery-score threshold rather than a stricter morphology-curation policy. This supports a modest RE advantage under the recovery-score throughput screen, but E2 is not interpreted as final ECG morphology curation because it does not enforce positive Lead~II T polarity.

E6 provides the main curation result. Under the same 2000-sample random budget per model, ET accepted 578 samples (28.9\%) and RE accepted 658 samples (32.9\%). The balanced subset contained 538 ET samples and 600 RE samples. Per-model coverage increased from 0.09248 for ET to 0.09888 for RE, while accepted sample means remained similar across models (Table~\ref{tab:curation_policy_summary}). The dominant rejection modes were non-positive Lead II T waves and low balanced recovery score, either alone or in combination. Representative accepted and rejected examples are shown in Figure~\ref{fig:curation_examples}; these examples provide a qualitative check of the curation policy rather than evidence of clinical representativeness. The accepted-feature distributions are shown in Figure~\ref{fig:curation_summary}.

\begin{figure}[t]
  \centering
  \begin{subfigure}[t]{0.49\linewidth}
    \centering
    \includegraphics[width=\linewidth]{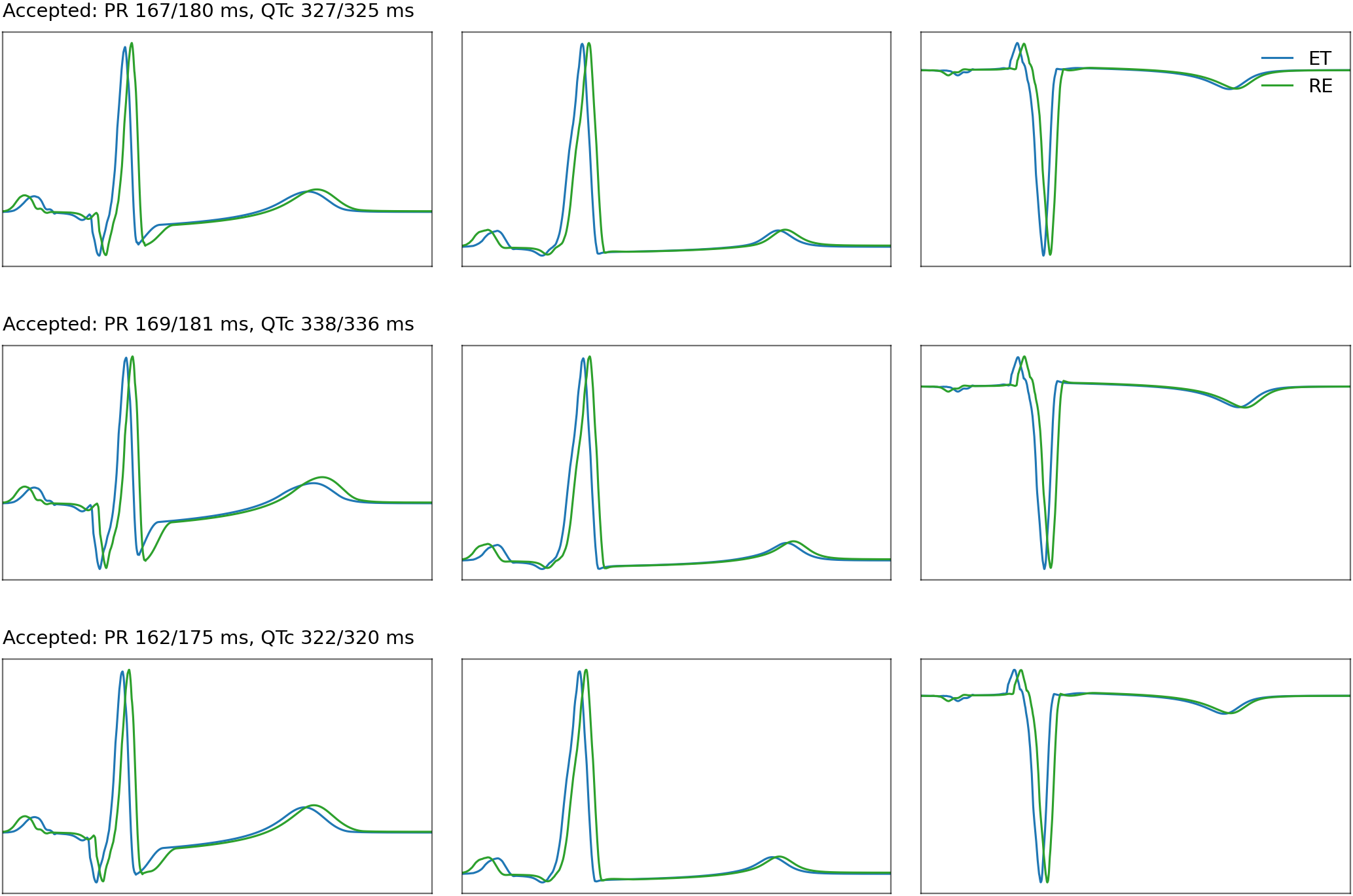}
    \caption{Accepted samples.}
  \end{subfigure}
  \hfill
  \begin{subfigure}[t]{0.49\linewidth}
    \centering
    \includegraphics[width=\linewidth]{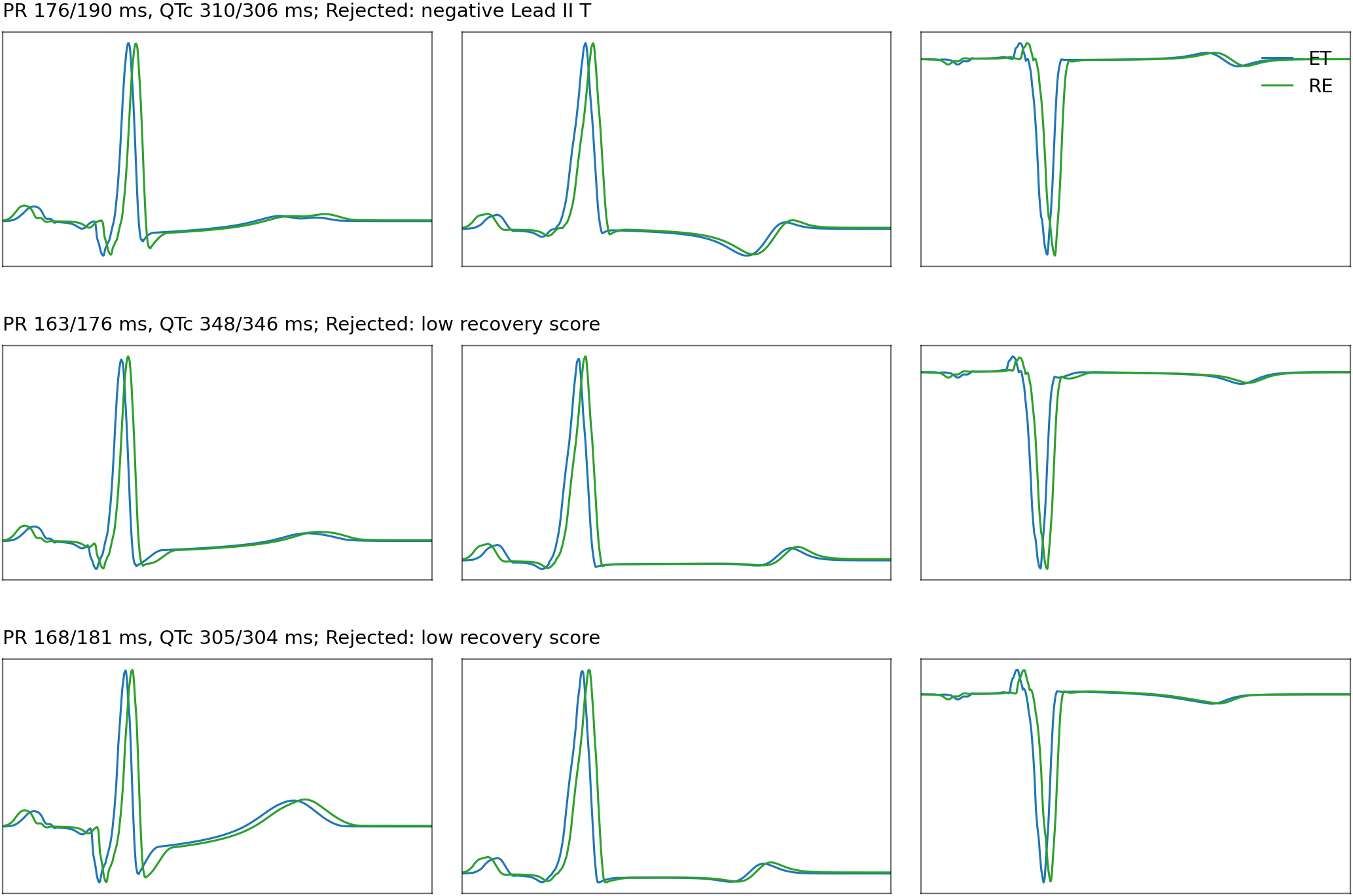}
    \caption{Rejected samples.}
  \end{subfigure}
  \caption{Representative accepted and rejected synthetic ECG samples from the final curation experiment. Each panel shows Lead~I, Lead~II, and V1 for three seeds with overlaid \textrm{ET} and \textrm{RE} traces. Rejected examples illustrate the dominant policy failures, including non-positive Lead~II T waves and low balanced recovery score.}
  \label{fig:curation_examples}
\end{figure}

\begin{figure}[t]
  \centering
  \includegraphics[width=0.88\linewidth]{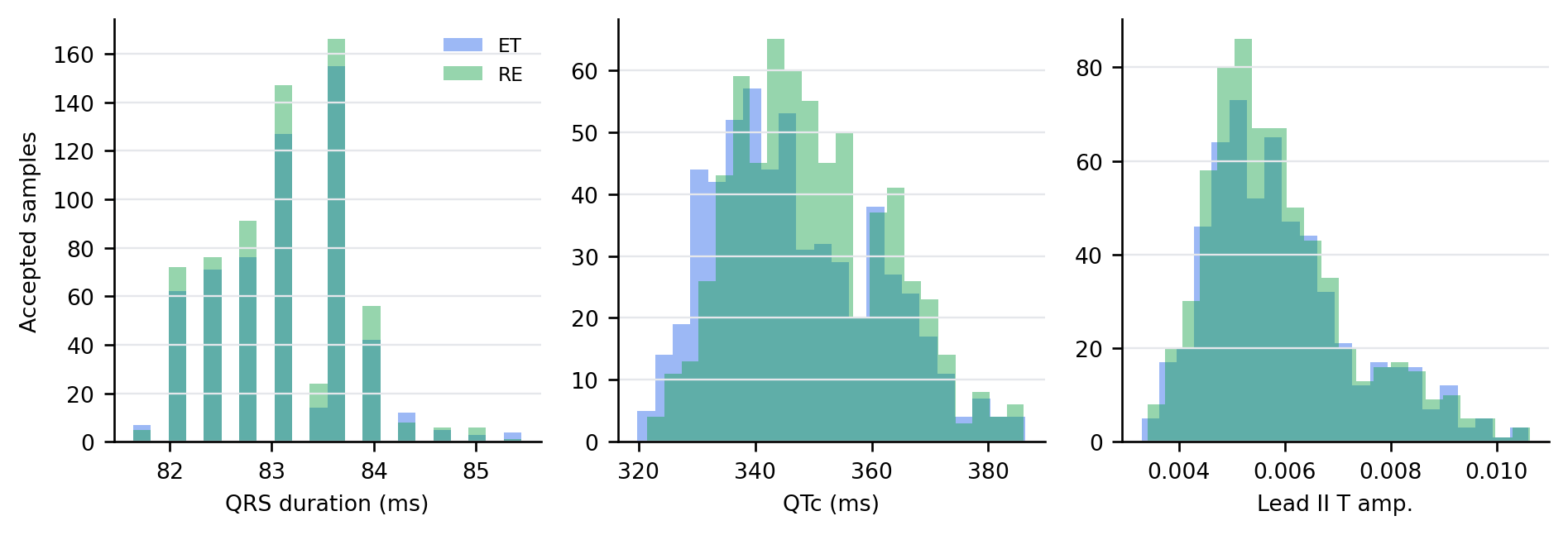}
  \caption{Accepted-feature distributions after diagnostics-aware curation under a fixed 2000-sample budget per model. Accepted ET and RE samples remain broadly comparable; yield and coverage are summarized in Table~\ref{tab:curation_policy_summary}.}
  \label{fig:curation_summary}
\end{figure}

\begin{table}[t]
  \centering
  \caption{Policy-specific throughput and curation outcomes under fixed sampling budgets.}
  \label{tab:curation_policy_summary}
  \small
  \begin{tabular}{@{}llccp{0.30\linewidth}@{}}
    \toprule
    Experiment & Model & Accepted / generated & Coverage & Main rejection mode \\
    \midrule
    E2 throughput & ET & 200/1000 (20.0\%) & -- & Recovery score \\
    E2 throughput & RE & 265/1000 (26.5\%) & -- & Recovery score \\
    E6 curation & ET & 578/2000 (28.9\%) & 0.09248 & T polarity / recovery score \\
    E6 curation & RE & 658/2000 (32.9\%) & 0.09888 & T polarity / recovery score \\
    \bottomrule
\end{tabular}
\end{table}

% ----------------------------------------------------------------------
\subsection{Downstream low-label P/QRS/T delineation}
\label{sec:results_downstream}
% ----------------------------------------------------------------------

As an application-oriented evaluation, we tested whether the curated synthetic ECGs provide a useful initialization signal for P/QRS/T delineation when only a small fraction of PTB-XL+ normal-sinus annotations is available. Across four subset seeds per fraction, we evaluated three conditions: a training-from-scratch baseline (Real-Only), random synthetic pretraining (Random Pretrain), and parameter-informed synthetic pretraining (Parameter-Informed).

In the extreme low-label regime (1\% real labels, corresponding to 56 clinical records), the real-only baseline exhibits high vulnerability to subset sampling noise, resulting in severe model collapse in one instance (a T-wave IoU drop to $0.3200$ under subset seed 3, leading to an overall seed-averaged T-wave IoU of $0.7478 \pm 0.2470$). No such collapse was observed in the tested pretrained runs, and the seed-wise T-wave IoU standard deviation decreased to $\le 0.003$.

Furthermore, parameter-informed pretraining provides small additional accuracy gains over random pretraining in the most label-scarce settings (Table~\ref{tab:downstream_pqrs}). At 1\% label fraction, parameter-informed pretraining achieves higher average IoU than random pretraining for P-wave ($0.8628$ vs. $0.8595$), QRS ($0.9059$ vs. $0.8990$), and T-wave ($0.8967$ vs. $0.8928$). At 2\% real labels, it continues to achieve the highest P IoU ($0.8681$) and QRS IoU ($0.9130$), whereas the real-only baseline achieves slightly higher T-wave IoU ($0.9022$). As real label availability increases to 5\% and 10\%, the performance differences among the three settings become marginal, which is expected as 277 to 555 clinical records provide sufficient data to train the U-Net from scratch without pretraining. Across most wave/fraction combinations, synthetic pretraining reduces seed-to-seed variance relative to real-only training, while the parameter-informed templates mainly provide a small accuracy advantage in the most label-scarce settings.

\begin{table}[t]
  \centering
  \caption{PTB-XL+ low-label P/QRS/T delineation performance. Values represent the mean $\pm$ standard deviation of the test IoU across four random subset seeds.}
  \label{tab:downstream_pqrs}
  \begingroup
  \setlength{\tabcolsep}{3pt}
  \footnotesize
  \begin{tabular}{@{}llccc@{}}
    \toprule
    Real labels & Wave & Real-Only Baseline & Random Pretrain & Parameter-Informed \\
    \midrule
    1\% 
    & P   & $0.8138 \pm 0.0578$ & $0.8595 \pm 0.0038$ & $\mathbf{0.8628 \pm 0.0030}$ \\
    & QRS & $0.8480 \pm 0.0807$ & $0.8990 \pm 0.0017$ & $\mathbf{0.9059 \pm 0.0018}$ \\
    & T   & $0.7478 \pm 0.2470$ & $0.8928 \pm 0.0005$ & $\mathbf{0.8967 \pm 0.0016}$ \\
    \midrule
    2\% 
    & P   & $0.8582 \pm 0.0036$ & $0.8652 \pm 0.0027$ & $\mathbf{0.8681 \pm 0.0014}$ \\
    & QRS & $0.9022 \pm 0.0034$ & $0.9051 \pm 0.0031$ & $\mathbf{0.9130 \pm 0.0009}$ \\
    & T   & $\mathbf{0.9022 \pm 0.0027}$ & $0.8978 \pm 0.0013$ & $0.9018 \pm 0.0006$ \\
    \midrule
    5\% 
    & P   & $0.8723 \pm 0.0038$ & $\mathbf{0.8733 \pm 0.0028}$ & $0.8716 \pm 0.0011$ \\
    & QRS & $0.9037 \pm 0.0069$ & $0.9096 \pm 0.0009$ & $\mathbf{0.9140 \pm 0.0009}$ \\
    & T   & $\mathbf{0.9027 \pm 0.0033}$ & $0.9003 \pm 0.0028$ & $0.8983 \pm 0.0029$ \\
    \midrule
    10\% 
    & P   & $0.8722 \pm 0.0032$ & $0.8729 \pm 0.0027$ & $\mathbf{0.8752 \pm 0.0002}$ \\
    & QRS & $0.9074 \pm 0.0026$ & $0.9098 \pm 0.0031$ & $\mathbf{0.9099 \pm 0.0015}$ \\
    & T   & $\mathbf{0.9101 \pm 0.0047}$ & $0.9065 \pm 0.0022$ & $0.9064 \pm 0.0025$ \\
    \bottomrule
  \end{tabular}
  \endgroup
\end{table}

These results suggest that mechanistically generated P/QRS/T structures provide a useful domain-specific initialization signal that stabilizes low-label segmentation models. In the most label-scarce settings, using biophysically motivated NSR, LBBB, and RBBB parameter templates provides a slightly more targeted structural initialization than purely random parameter sampling, while both synthetic pretraining strategies reduce the pronounced instability observed in the 1\% real-only baseline.

Taken together, these experiments show that RE preserves ET-like activation controllability while modestly improving diagnostically curated ECG morphology yield and coverage under balanced multi-lead curation. The improvement is not large enough to support a strong superiority claim, but it is consistent across accepted count, balanced subset size, and per-model coverage.

% ======================================================================
\section{Discussion}
\label{sec:discussion}
% ======================================================================

This study developed a diagnostics-aware framework for synthetic ECG generation on a unified cardiac graph. The main finding is that activation consistency, morphology control, and sample curation can be treated as coupled but separable components of the generation pipeline. The Bellman residual provides a computable activation-consistency check for graph-based eikonal activation fields, while the diagnostics pipeline converts generated ECGs into policy-dependent accepted or rejected samples. Together, these components support a reproducible, lightweight approach to quality-controlled synthetic ECG generation.

The activation-certificate results support the use of the eikonal field as a certificate-checked backbone for the ECG synthesis experiments. On the cardiac graph, activation times extracted from the \textrm{RE} backend remained close to the eikonal reference after causal predecessor filtering, with near-millisecond error and no predecessor cycles. The residual bound was not tight enough to be interpreted as a precise numerical error estimate, but it was useful as a consistency diagnostic for the extracted activation ordering.

The ECG synthesis experiments indicate that adding pseudo-diffusion reaction--eikonal recovery dynamics did not disrupt activation controllability. Across activation-atlas sweeps, \textrm{RE} tracked \textrm{ET} closely in relative knob-response trends. This supports the design choice of using a shared activation backbone and ECG forward pipeline, so that differences between \textrm{ET} and \textrm{RE} can be interpreted mainly as consequences of the recovery layer. The sensitivity analysis further supported this separation, with high activation/recovery disentanglement indices for both models.

The recovery experiments clarify the roles of the recovery controls. The endo-epicardial APD gradient was the main determinant of the admissible T-wave morphology window. Global $\varepsilon_0$ scaling shifted QT/T timing and amplitude in an interpretable direction, but did not by itself provide strong morphology-quality control. The pseudo-diffusion parameter~$\kappa$ acted as a secondary smoothing and modulation parameter: it improved local repolarization smoothness and recovery plausibility with little QTc displacement. This interpretation is more conservative than treating $\kappa$ as a primary T-wave or QT control knob, and is consistent with the normalized sensitivity maps.

The curation results highlight why policy-aware interpretation is necessary for synthetic ECG generation. The throughput screen and final balanced curation experiment answer different questions: the former measures passage through a broad recovery-score filter, whereas the latter enforces multi-lead morphology criteria including Lead~II T-polarity. Under the final curation policy, \textrm{RE} produced a modestly larger accepted set and slightly higher feature-space coverage than \textrm{ET}. The magnitude of this improvement supports a conservative interpretation: \textrm{RE} adds recovery-aware morphology modulation while preserving activation controllability, rather than establishing broad superiority over \textrm{ET}.

The downstream delineation evaluation provides an external check on whether the generated signals contain useful training structure. Synthetic pretraining followed by small-fraction PTB-XL+ fine-tuning reduced seed-to-seed variance in P-, QRS-, and T-wave segmentation compared with real-only training in extreme label scarcity (1\% and 2\% label availability), and no collapse was observed in the tested pretrained runs. Furthermore, pretraining with 12 biophysically motivated parameter configurations representing NSR, LBBB, and RBBB-like morphologies achieved small additional accuracy gains over random parameter pretraining in the most label-scarce settings. This is consistent with the intended role of the simulator: while synthetic ECGs from a simplified heart--torso model cannot capture the full diversity of clinical cohorts or replace real annotations, they provide a stabilizing structural initialization that mitigates training failure when labeled clinical data is severely limited.

Several limitations should be noted. The accepted ECGs are not diagnostic-grade clinical ECGs; validation here focuses on internal morphology criteria, activation consistency, and feature-space coverage rather than cohort-level clinical realism. Detector-derived intervals and amplitudes are simulation diagnostics, and the threshold-based acceptance policies should be evaluated against learned or expert-adjudicated curation rules. The graph, torso, and source models are also simplified relative to subject-specific electrophysiology and volume-conductor models. Finally, \textrm{RE} improves curated yield and coverage only modestly and is slower than \textrm{ET}, so its value depends on whether recovery-aware morphology control is needed for the target application.

Future work should evaluate the curated signals against external ECG cohorts, downstream tasks, learned or expert-adjudicated curation policies, and more subject-specific graph and torso models. Another natural direction is inverse ECG modeling: using observed body-surface ECGs to infer latent activation or recovery structure on the cardiac graph. The Bellman certificate and diagnostics-aware morphology checks could serve as constraints or initialization signals in that setting, but the present study restricts its claims to forward synthesis and curation.

% ======================================================================
\section{Conclusion}
\label{sec:conclusion}
% ======================================================================

We presented a diagnostics-aware framework for synthetic ECG generation on a unified cardiac graph. The framework combines a Bellman residual certificate for graph-based activation consistency, \textrm{ET} and \textrm{RE} synthesis backends with separated activation and recovery controls, and a policy-based curation pipeline for ECG morphology assessment. Experiments showed that \textrm{RE}-derived activation remained close to the eikonal backbone, that \textrm{RE} preserved \textrm{ET}-like activation knob responses, and that recovery controls produced interpretable QT/T and morphology effects.

Under the final balanced multi-lead curation policy, the \textrm{RE} backend modestly improved accepted morphology yield and feature-space coverage relative to \textrm{ET} while maintaining activation controllability. These results support the use of certificate-checked activation and diagnostics-aware curation as practical components of curated synthetic ECG generation. Further validation against clinical ECG distributions and downstream signal-processing tasks is needed before clinical representativeness can be claimed.

\bibliography{references}

\clearpage
\section*{Supplementary Material}
\renewcommand{\thesection}{S\arabic{section}}
\renewcommand{\thesubsection}{S\arabic{section}.\arabic{subsection}}
\renewcommand{\thetheorem}{S\arabic{theorem}}
\renewcommand{\theequation}{S\arabic{equation}}
\renewcommand{\thefigure}{S\arabic{figure}}
\renewcommand{\thetable}{S\arabic{table}}
\renewcommand{\theHsection}{supp.\arabic{section}}
\renewcommand{\theHsubsection}{supp.\arabic{section}.\arabic{subsection}}
\renewcommand{\theHtheorem}{supp.\arabic{theorem}}
\renewcommand{\theHequation}{supp.\arabic{equation}}
\renewcommand{\theHfigure}{supp.\arabic{figure}}
\renewcommand{\theHtable}{supp.\arabic{table}}
\setcounter{section}{0}
\setcounter{subsection}{0}
\setcounter{theorem}{0}
\setcounter{equation}{0}
\setcounter{figure}{0}
\setcounter{table}{0}

\section{Proofs of main results}
\label{sec:supp_proofs}

For convenience, let
\[
X
:=
\{x:V\to\mathbb{R}\cup\{\infty\}: x(s)=0 \text{ for all } s\in S\},
\]
let $N:=|V|$,
let $F$ denote the Bellman operator in \eqref{eq:bellman_op}, and let $T$ denote the shortest-path activation time in \eqref{eq:eikonal_time}. For $x\in X$ and $i\notin S$, define the greedy predecessor set
\[
\Pi_x(i):=\arg\min_{j\sim i}\bigl(x(j)+w_{ji}\bigr),
\]
and choose $\pi_x(i)\in\Pi_x(i)$ by a fixed tie-breaking rule.

\subsection{Basic properties of the Bellman operator}

We first record three basic properties of the Bellman operator that are used throughout this supplement.

\begin{lemma}[Monotonicity]
\label{lem:supp_mono}
For any $x,y\in X$, if $x\le y$ pointwise on $V$, then $Fx\le Fy$ pointwise on $V$.
\end{lemma}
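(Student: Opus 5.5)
The plan is to verify monotonicity directly from the definition \eqref{eq:bellman_op}, splitting into the two cases $i\in S$ and $i\notin S$. The argument works with the extended reals $\mathbb{R}\cup\{\infty\}$ under the convention that $a+\infty=\infty$ for finite $a$. The order is the usual one, in which $\infty$ dominates every real number.

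\emph{Case $i\in S$.} Here $(Fx)(i)=0=(Fy)(i)$ by definition, so the inequality $(Fx)(i)\le(Fy)(i)$ holds trivially.

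\emph{Case $i\notin S$.} Fix a neighbour $j\sim i$. Since $x(j)\le y(j)$ and $w_{ji}>0$ is a finite real number, adding $w_{ji}$ preserves the order in the extended reals, so $x(j)+w_{ji}\le y(j)+w_{ji}$. This needs a brief check when one side is infinite. If $y(j)=\infty$, the right side is $\infty$ and the inequality is automatic. If $x(j)=\infty$, then $y(j)=\infty$ as well, and both sides equal $\infty$. Taking the minimum over the finite neighbour set of $i$ then gives $\min_{j\sim i}(x(j)+w_{ji})\le\min_{j\sim i}(y(j)+w_{ji})$. This holds because if the right-hand minimum is attained at $j^\ast$, the left-hand minimum is at most $x(j^\ast)+w_{j^\ast i}\le y(j^\ast)+w_{j^\ast i}$. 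Hence $(Fx)(i)\le(Fy)(i)$.

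A separate edge case is an isolated non-source node with no neighbours. There the minimum over the empty set should be read as $\infty$, which is consistent with the convention for unreachable nodes. In that case both values equal $\infty$ and the inequality still holds.

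There is no real obstacle in this lemma. The only point needing care is the extended-real arithmetic, namely the handling of $\infty$ entries and of empty neighbour sets.
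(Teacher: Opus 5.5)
Your proof is correct and follows the same route as the paper: the trivial case $i\in S$, then adding $w_{ji}$ to $x(j)\le y(j)$ for each neighbour $j\sim i$ and taking the minimum over neighbours. Your explicit handling of $\infty$ entries and of empty neighbour sets is a harmless refinement that the paper leaves implicit.
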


\begin{proof}
For $i\in S$, we have $(Fx)(i)=(Fy)(i)=0$.
Now fix $i\notin S$. Since $x(j)\le y(j)$ for all neighbors $j\sim i$, we also have
\[
x(j)+w_{ji}\le y(j)+w_{ji}
\qquad\text{for all } j\sim i.
\]
Taking the minimum over $j\sim i$ gives
\[
(Fx)(i)\le (Fy)(i).
\]
This proves the claim.
\end{proof}

\begin{lemma}[Nonexpansiveness in $\ell^\infty$]
\label{lem:supp_nonexp}
For any $x,y\in X$,
\[
\|Fx-Fy\|_\infty \le \|x-y\|_\infty.
\]
\end{lemma}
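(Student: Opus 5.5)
The plan is to prove nonexpansiveness pointwise and then take the supremum over $V$. Set $c:=\|x-y\|_\infty$. If $c=\infty$ the inequality is trivial, so assume $c<\infty$. Then $y(j)\le x(j)+c$ and $x(j)\le y(j)+c$ for every $j\in V$, with the convention that $x(j)=\infty$ exactly when $y(j)=\infty$. This convention is what finiteness of $c$ forces, provided we use the usual extended-real rule $\infty-\infty:=0$ in the sup norm.

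First I would dispose of the source nodes. For $i\in S$ we have $(Fx)(i)=(Fy)(i)=0$, so the difference there is $0\le c$.

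For $i\notin S$, the key step combines a translation identity with Lemma~\ref{lem:supp_mono}. For any constant $c\ge 0$, the definition of $F$ gives
\[
  \min_{j\sim i}\bigl(x(j)+c+w_{ji}\bigr)=\min_{j\sim i}\bigl(x(j)+w_{ji}\bigr)+c .
\]
I would not invoke Lemma~\ref{lem:supp_mono} on $x+c$ directly, because $x+c\notin X$ (it is nonzero on $S$). Instead I would argue directly at the non-source node $i$. From $y(j)+w_{ji}\le x(j)+w_{ji}+c$ for all $j\sim i$, taking minima over $j\sim i$ yields $(Fy)(i)\le (Fx)(i)+c$. Exchanging the roles of $x$ and $y$ gives $(Fx)(i)\le (Fy)(i)+c$. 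Together these give $|(Fx)(i)-(Fy)(i)|\le c$.

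The only delicate point is the extended-value bookkeeping. If $(Fx)(i)=\infty$, then every neighbour $j$ has $x(j)=\infty$, hence $y(j)=\infty$, hence $(Fy)(i)=\infty$, and the difference is interpreted as $0$. Similarly, a node with no neighbours gives $\min\emptyset=\infty$ for both $x$ and $y$. With these cases checked, taking the supremum over $i\in V$ yields $\|Fx-Fy\|_\infty\le\|x-y\|_\infty$.
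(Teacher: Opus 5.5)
Your proof is correct and follows essentially the same route as the paper: dispose of $S$, then at each $i\notin S$ bound the difference of the neighbour-minima by $\max_{j\sim i}|x(j)-y(j)|\le\|x-y\|_\infty$ and take the supremum over $V$. The differences are minor. You derive $|\min_{j\sim i}a_j-\min_{j\sim i}b_j|\le\max_{j\sim i}|a_j-b_j|$ inline from the two one-sided bounds, where the paper simply quotes it, and your explicit handling of the $\infty$-valued cases fills in bookkeeping the paper leaves implicit.
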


\begin{proof}
For $i\in S$, the difference is zero.
Now fix $i\notin S$, and define
\[
a_j:=x(j)+w_{ji},
\qquad
b_j:=y(j)+w_{ji},
\qquad j\sim i.
\]
Then
\[
|(Fx)(i)-(Fy)(i)|
=
\left|\min_{j\sim i} a_j - \min_{j\sim i} b_j\right|
\le
\max_{j\sim i}|a_j-b_j|.
\]
Since $a_j-b_j=x(j)-y(j)$, it follows that
\[
|(Fx)(i)-(Fy)(i)|
\le
\max_{j\sim i}|x(j)-y(j)|
\le
\|x-y\|_\infty.
\]
Taking the maximum over $i\in V$ yields the result.
\end{proof}

\begin{lemma}[Path estimate under an approximate DPP]
\label{lem:supp_path_estimate}
Let $x\in X$ and $\delta\ge 0$.
If
\[
x\le Fx+\delta
\qquad\text{pointwise on }V,
\]
then for any path $\gamma=(v_0,\dots,v_m)$ with $v_0\in S$ and $v_m=i$,
\[
x(i)\le \operatorname{cost}_w(\gamma)+m\delta.
\]
\end{lemma}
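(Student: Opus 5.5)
The plan is to induct on the hop count $m$ of the path, using only the pointwise inequality $x\le Fx+\delta$ and the definition of $F$ in \eqref{eq:bellman_op}. Write $\operatorname{cost}_w(\gamma)=\sum_{k=1}^{m} w_{v_{k-1}v_k}$, which is the quantity minimized in \eqref{eq:eikonal_time}.

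\emph{Base case} $m=0$. Here $\gamma=(v_0)$ with $v_0=i\in S$. Since $x\in X$, we have $x(i)=0$, and the right-hand side is also $0$, so the bound holds with equality.

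\emph{Inductive step.} Assume the claim for all paths with $m-1$ hops that start in $S$, and let $\gamma=(v_0,\dots,v_m)$ with $v_m=i$. There are two cases.
\begin{itemize}
\item If $i\in S$, then $x(i)=0\le \operatorname{cost}_w(\gamma)+m\delta$, because all weights $w>0$ and $\delta\ge 0$.
\item If $i\notin S$, then $v_{m-1}\sim i$. The hypothesis and the definition of $F$ give
\[
x(i)\le (Fx)(i)+\delta\le x(v_{m-1})+w_{v_{m-1}i}+\delta .
\]
The truncated path $(v_0,\dots,v_{m-1})$ starts in $S$ and has $m-1$ hops, so the induction hypothesis gives $x(v_{m-1})\le \operatorname{cost}_w(v_0,\dots,v_{m-1})+(m-1)\delta$. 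Substituting, and using $\operatorname{cost}_w(v_0,\dots,v_{m-1})+w_{v_{m-1}i}=\operatorname{cost}_w(\gamma)$, yields $x(i)\le \operatorname{cost}_w(\gamma)+m\delta$.
\end{itemize}

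Because the induction hypothesis is applied to an arbitrary path with $m-1$ hops, the argument covers paths that revisit vertices or pass through other source nodes along the way.

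The only delicate point is the extended-real convention. If $x(v_{m-1})=\infty$, the inequality holds trivially in $\mathbb{R}\cup\{\infty\}$. In fact the induction hypothesis already rules this out, since it bounds $x(v_{m-1})$ by a finite quantity. No monotonicity (Lemma~\ref{lem:supp_mono}) or nonexpansiveness (Lemma~\ref{lem:supp_nonexp}) is needed.

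This lemma supplies the upper-bound half of Theorem~\ref{thm:comparison}. Minimizing over paths with at most $m_{\max}$ hops gives $x-T\le m_{\max}\delta$, with the hop count tied to the greedy depth in the later argument.
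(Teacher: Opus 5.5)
Your induction is correct and is essentially the paper's argument. The paper chains the same one-step bound $x(v_k)\le (Fx)(v_k)+\delta\le x(v_{k-1})+w_{v_{k-1}v_k}+\delta$ along $\gamma$ and telescopes from $x(v_0)=0$.

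Your separate case $i\in S$ is slightly more careful than the paper's telescoping, which uses the neighbor-minimum formula for $(Fx)(v_k)$ at every $k$. If an intermediate $v_k$ is a source, then $(Fx)(v_k)=0$. The hypothesis only bounds $x$ from above, so $x(v_{k-1})$ can be very negative and that single step can fail. Your induction simply restarts at such a node, which is equivalent to telescoping from the last source on $\gamma$.

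Your closing remark about Theorem~\ref{thm:comparison} does not follow, however. Applying the lemma to a shortest path realizing $T(i)$ gives $x(i)\le T(i)+h(i)\delta$, where $h(i)$ is the hop count of that path. This count is unrelated to the greedy depth of $x$, and minimizing only over paths with at most $m_{\max}$ hops can give a cost strictly larger than $T(i)$.

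For example, take the triangle with $S=\{s\}$, $w_{sa}=w_{ab}=1$ and $w_{sb}=2+\delta-\eta$ with $0<\eta<\delta$. The field $x(s)=0$, $x(a)=1+\delta$, $x(b)=2+2\delta-\eta$ has $\delta_{\mathrm{Bell}}(x)=\delta$. Both greedy predecessors equal $s$, so $m_{\max}=1$, yet $x(b)-T(b)=2\delta-\eta>\delta$.

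The greedy chain only delivers the lower bound $x(i)\ge T(i)-m(i)\delta$. For the upper bound, this lemma must be paired with the shortest-path hop count, which is at most $N-1$. The same example shows that the $m_{\max}$ form of the comparison bound cannot hold in general.
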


\begin{proof}
For each $k=1,\dots,m$, the assumption $x\le Fx+\delta$ gives
\[
x(v_k)\le (Fx)(v_k)+\delta
=
\min_{u\sim v_k}\bigl(x(u)+w_{uv_k}\bigr)+\delta
\le x(v_{k-1})+w_{v_{k-1}v_k}+\delta.
\]
Summing these inequalities over $k=1,\dots,m$ and using $x(v_0)=0$ yields
\[
x(i)=x(v_m)\le \sum_{k=1}^m w_{v_{k-1}v_k}+m\delta
=\operatorname{cost}_w(\gamma)+m\delta.
\]
\end{proof}

\subsection{Proof of Theorem~\ref{thm:bellman_fp}}

\begin{proof}[Proof of Theorem~\ref{thm:bellman_fp}]
We prove first that $T=FT$.

If $i\in S$, then by definition $T(i)=0=(FT)(i)$.

Now fix $i\notin S$.

\medskip
\noindent
\textit{Step 1: $T(i)\le (FT)(i)$.}
Let $j\sim i$ be any neighbor.
Take a path $\gamma_j$ from $S$ to $j$ that achieves $T(j)$.
Then $\gamma_j$ followed by the edge $(j,i)$ is a path from $S$ to $i$ of cost $T(j)+w_{ji}$.
Hence
\[
T(i)\le T(j)+w_{ji}
\qquad\text{for all } j\sim i.
\]
Taking the minimum over neighbors gives
\[
T(i)\le \min_{j\sim i}\bigl(T(j)+w_{ji}\bigr)=(FT)(i).
\]

\medskip
\noindent
\textit{Step 2: $T(i)\ge (FT)(i)$.}
Let $\gamma=(v_0,\dots,v_m)$ be a shortest path from some $v_0\in S$ to $v_m=i$ that achieves $T(i)$.
Then $v_{m-1}\sim i$, and
\[
T(i)=\operatorname{cost}_w(\gamma)
=
\operatorname{cost}_w(v_0,\dots,v_{m-1})+w_{v_{m-1}i}
\ge
T(v_{m-1})+w_{v_{m-1}i}.
\]
Therefore
\[
T(i)\ge \min_{j\sim i}\bigl(T(j)+w_{ji}\bigr)=(FT)(i).
\]

Combining the two steps yields $T=FT$.

\medskip
\noindent
We now prove uniqueness.
Let $x\in X$ satisfy $x=Fx$.

First, we show $x\le T$.
Fix $i\in V$, and let $\gamma=(v_0,\dots,v_m)$ be any path from $v_0\in S$ to $v_m=i$.
Since $x=Fx$,
\[
x(v_k)=(Fx)(v_k)\le x(v_{k-1})+w_{v_{k-1}v_k}
\qquad\text{for } k=1,\dots,m.
\]
Summing over $k$ and using $x(v_0)=0$ gives
\[
x(i)\le \operatorname{cost}_w(\gamma).
\]
Taking the minimum over all such paths yields $x(i)\le T(i)$.

Next, we show $x\ge T$.
Fix $i\notin S$, and choose a greedy predecessor $\pi_x(i)\in\Pi_x(i)$.
Since $x=Fx$,
\[
x(i)=x(\pi_x(i))+w_{\pi_x(i)i}.
\]
Because $w_{\pi_x(i)i}>0$, we have
\[
x(\pi_x(i))=x(i)-w_{\pi_x(i)i}<x(i).
\]
Thus $x$ strictly decreases along the predecessor chain, so the chain cannot contain a directed cycle.
Since $V$ is finite, the chain reaches the source set in at most $N-1$ steps.
Therefore there exist nodes $v_0\in S, v_1,\dots,v_m=i$ such that
\[
v_{k-1}=\pi_x(v_k), \qquad k=1,\dots,m.
\]
Summing the equalities along this chain gives
\[
x(i)=x(v_0)+\sum_{k=1}^m w_{v_{k-1}v_k}
=
\operatorname{cost}_w(v_0,\dots,v_m).
\]
Since $T(i)$ is the minimum path cost from $S$ to $i$, we obtain $T(i)\le x(i)$.

Hence $x=T$ on $V$, which proves uniqueness.
\end{proof}

\subsection{Proof of Theorem~\ref{thm:comparison}}

When the greedy predecessor graph is acyclic, every reachable node $i$ has a well-defined greedy depth
\[
m(i):=
\begin{cases}
0, & i\in S,\\
1+m(\pi_x(i)), & i\notin S,
\end{cases}
\qquad
m_{\max}:=\max_{i\in V} m(i).
\]

\begin{proof}[Proof of Theorem~\ref{thm:comparison}]
Assume $\delta_{\mathrm{Bell}}(x)\le\delta$.
Then for every $i\in V$,
\[
-\delta\le x(i)-(Fx)(i)\le \delta.
\]
Equivalently, for every $i\notin S$,
\[
\bigl|x(i)-\bigl(x(\pi_x(i))+w_{\pi_x(i)i}\bigr)\bigr|\le \delta.
\]

Fix a reachable node $i\in V$, and let
\[
v_0\in S,\; v_1,\dots,v_{m(i)}=i
\]
be the greedy predecessor chain, so that $v_{k-1}=\pi_x(v_k)$ for $k=1,\dots,m(i)$.
Summing the one-step inequalities along this chain and using $x(v_0)=0$ gives
\[
\left|x(i)-\sum_{k=1}^{m(i)} w_{v_{k-1}v_k}\right|
\le
m(i)\delta.
\]

Using the same telescoping argument as in the proof of Theorem~\ref{thm:bellman_fp}, the greedy chain yields the nodewise estimate
\[
|x(i)-T(i)|\le m(i)\delta.
\]
Therefore
\[
\|x-T\|_\infty \le m_{\max}\delta.
\]
\end{proof}

\begin{remark}[Relation to the worst-case factor]
Since $m(i)\le N-1$ for every reachable node on an acyclic greedy predecessor graph, the coarser worst-case form
\[
\|x-T\|_\infty \le (N-1)\delta
\]
is recovered immediately. In the experiments, however, $m_{\max}$ was much smaller than $N-1$, so the greedy-depth form is substantially more informative numerically.
\end{remark}

\subsection{Affine calibration and scaled Bellman comparison}

% The affine calibration in Section~\ref{sec:theory} is obtained by regressing a simulator-produced activation field $\tau$ against the Eikonal time $T$ on the set of reachable nodes
% \[
% \mathcal{R}:=\{i\in V: T(i)<\infty\}.
% \]

% \begin{proposition}[Least-squares affine calibration]
% \label{prop:supp_affine_fit}
% Assume $|\mathcal{R}| \ge 2$ and that $T$ is not constant on $\mathcal{R}$. The minimizer of
% \[
% Q(\alpha,\beta)
% :=
% \sum_{i\in\mathcal{R}}
% \bigl(\tau(i)-(\alpha T(i)+\beta)\bigr)^2
% \]
% is given by
% \[
% \widehat\alpha
% =
% \frac{\sum_{i\in\mathcal{R}} (T(i)-\overline T)(\tau(i)-\overline\tau)}
% {\sum_{i\in\mathcal{R}} (T(i)-\overline T)^2},
% \qquad
% \widehat\beta
% =
% \overline\tau-\widehat\alpha\,\overline T,
% \]
% where
% \[
% \overline T := \frac{1}{|\mathcal{R}|}\sum_{i\in\mathcal{R}} T(i),
% \qquad
% \overline\tau := \frac{1}{|\mathcal{R}|}\sum_{i\in\mathcal{R}} \tau(i).
% \]
% \end{proposition}

% \begin{proof}
% Setting the partial derivatives of $Q$ to zero gives the normal equations
% \[
% \sum_{i\in\mathcal{R}} \bigl(\tau(i)-\alpha T(i)-\beta\bigr)=0,
% \qquad
% \sum_{i\in\mathcal{R}} T(i)\bigl(\tau(i)-\alpha T(i)-\beta\bigr)=0.
% \]
% The first equation yields $\beta=\overline\tau-\alpha\overline T$. Substituting this into the second equation gives
% \[
% \sum_{i\in\mathcal{R}} (T(i)-\overline T)(\tau(i)-\overline\tau)
% \;=\;
% \alpha
% \sum_{i\in\mathcal{R}} (T(i)-\overline T)^2,
% \]
% from which the stated formula for $\widehat\alpha$ follows. The formula for $\widehat\beta$ is then immediate.
% \end{proof}

\begin{lemma}[Homogeneity of the scaled Bellman operator]
\label{lem:supp_scaled_homogeneity}
For $\alpha>0$ and any $y\in X$,
\[
F_{\alpha w}(\alpha y)=\alpha F(y),
\]
where $F_{\alpha w}$ is the Bellman operator with scaled weights $\alpha w_{ij}$.
\end{lemma}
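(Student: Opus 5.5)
The proof is a direct pointwise computation from the definition of the Bellman operator in \eqref{eq:bellman_op}, applied to the scaled weights $\alpha w_{ij}$. I will split into the two cases of the definition.

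First, for $i\in S$, both sides vanish: $(F_{\alpha w}(\alpha y))(i)=0$ by definition, and $\alpha (Fy)(i)=\alpha\cdot 0=0$. I also note that $\alpha y\in X$ whenever $y\in X$, since $\alpha y(s)=0$ for $s\in S$, so the left-hand side is well defined.

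Second, for $i\notin S$, I will write
\[
(F_{\alpha w}(\alpha y))(i)=\min_{j\sim i}\bigl(\alpha y(j)+\alpha w_{ji}\bigr)=\min_{j\sim i}\alpha\bigl(y(j)+w_{ji}\bigr)=\alpha\min_{j\sim i}\bigl(y(j)+w_{ji}\bigr)=\alpha (Fy)(i).
\]
The key step is pulling the positive constant $\alpha$ out of the minimum. This is valid because $t\mapsto \alpha t$ is strictly increasing for $\alpha>0$, so it preserves the order and hence commutes with $\min$ over the finite neighbor set.

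The only point needing care, and the nearest thing to an obstacle, is the extended value $\infty$. I will adopt the convention $\alpha\cdot\infty=\infty$ and $\infty+c=\infty$ for real $c$. Under this convention $t\mapsto\alpha t$ remains an order-preserving bijection of $\mathbb{R}\cup\{\infty\}$, so the interchange with $\min$ still holds, including when all neighbors take the value $\infty$. Combining the two cases gives the identity pointwise on $V$.
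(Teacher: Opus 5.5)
Your proof is correct and matches the paper's own argument: the same case split on $i\in S$ versus $i\notin S$, followed by pulling the positive factor $\alpha$ out of the minimum over neighbors. Your extra checks that $\alpha y\in X$ and that the convention $\alpha\cdot\infty=\infty$ keeps the interchange with $\min$ valid are sound refinements that the paper leaves implicit.
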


\begin{proof}
For $i\in S$, both sides are zero.
For $i\notin S$,
\[
(F_{\alpha w}(\alpha y))(i)
=
\min_{j\sim i}\bigl(\alpha y(j)+\alpha w_{ji}\bigr)
=
\alpha \min_{j\sim i}\bigl(y(j)+w_{ji}\bigr)
=
\alpha(Fy)(i).
\]
\end{proof}

\begin{proposition}[Affine comparison on the scaled graph]
\label{prop:supp_affine_comparison}
Let $\tau:V\to\mathbb{R}$ satisfy $\tau(s)=\beta$ for all $s\in S$, and set $\tilde\tau:=\tau-\beta\in X$. Fix $\alpha>0$ and assume
\[
\delta_{\mathrm{affBell}}(\tau;\alpha,\beta)
=
\|\tilde\tau-F_{\alpha w}(\tilde\tau)\|_\infty
\le \delta.
\]
If the greedy predecessor graph of $\tilde\tau$ with respect to $F_{\alpha w}$ is acyclic, then
\[
\|\tau-(\alpha T+\beta)\|_\infty \le m_{\max}\delta,
\]
where $m_{\max}$ is the maximum greedy depth of $\tilde\tau$ with respect to the scaled operator $F_{\alpha w}$.
\end{proposition}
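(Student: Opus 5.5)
The plan is to reduce Proposition~\ref{prop:supp_affine_comparison} to Theorem~\ref{thm:comparison}, applied on the same graph but with the scaled weights $\alpha w_{ij}$.

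\textbf{Step 1: the scaled graph is a legitimate instance.} For $\alpha>0$ the weights $\alpha w_{ij}$ are still symmetric and strictly positive. They arise from \eqref{eq:weight} with speeds $v/\alpha$, and in any case the proof of Theorem~\ref{thm:comparison} uses only positivity and symmetry of the weights. So Theorem~\ref{thm:bellman_fp} and Theorem~\ref{thm:comparison} hold verbatim with $w$ replaced by $\alpha w$, $F$ by $F_{\alpha w}$, and $T$ by the scaled shortest-path time $T_\alpha$.

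\textbf{Step 2: identify the scaled shortest-path time.} Every path cost scales by $\alpha$, so $T_\alpha=\alpha T$ directly from \eqref{eq:eikonal_time}. Alternatively, Lemma~\ref{lem:supp_scaled_homogeneity} gives $F_{\alpha w}(\alpha T)=\alpha FT=\alpha T$. Since $\alpha T\in X$, the uniqueness part of Theorem~\ref{thm:bellman_fp} on the scaled graph then forces $T_\alpha=\alpha T$.

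\textbf{Step 3: apply the comparison principle.} By hypothesis $\tilde\tau=\tau-\beta$ vanishes on $S$, so $\tilde\tau\in X$. We also have $\|\tilde\tau-F_{\alpha w}\tilde\tau\|_\infty\le\delta$, and the greedy predecessor graph of $\tilde\tau$ with respect to $F_{\alpha w}$ is acyclic. Theorem~\ref{thm:comparison} on the scaled graph then yields
\[
\|\tilde\tau-\alpha T\|_\infty\le m_{\max}\delta,
\]
where $m_{\max}$ is exactly the scaled greedy depth in the statement. Since $\tilde\tau-\alpha T=\tau-(\alpha T+\beta)$, this is the claim.

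\textbf{Main obstacle.} The delicate point is Step 3's reliance on Theorem~\ref{thm:comparison}. Its supplementary proof bounds $|x(i)-\mathrm{cost}(\text{greedy chain})|$ but then passes to $|x(i)-T(i)|$ somewhat quickly. The lower bound $x(i)\ge T(i)-m(i)\delta$ follows because the greedy chain is a path, so its cost is at least $T(i)$. The upper bound needs Lemma~\ref{lem:supp_path_estimate} applied along a shortest path, whose hop count may differ from $m(i)$.

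If a fully rigorous constant is required, I would first try to control that hop count by the greedy depth. Failing that, I would state the bound with $\max\{m_{\max},h\}$, where $h$ is the hop length of shortest paths. Since the proposition is asserted in the same form as Theorem~\ref{thm:comparison}, I will simply invoke that theorem as established and note that the affine version inherits exactly its constant.
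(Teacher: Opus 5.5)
Your Steps 1--3 are exactly the paper's proof. Lemma~\ref{lem:supp_scaled_homogeneity} and Theorem~\ref{thm:bellman_fp} identify $\alpha T$ as the fixed point of $F_{\alpha w}$; Theorem~\ref{thm:comparison} is then applied to $\tilde\tau$ with weights $\alpha w$, and the shift by $\beta$ finishes. The reduction is sound. The obstacle you flagged, however, is a genuine failure rather than a matter of polish, and it defeats your argument and the paper's alike. The nodewise bound $|x(i)-T(i)|\le m(i)\delta$ used in the proof of Theorem~\ref{thm:comparison} is false, and so is the global bound with $m_{\max}$. In particular, your first repair (controlling the shortest-path hop count by the greedy depth) cannot succeed.

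Here is a counterexample. Take $V=\{s,a,b\}$, $S=\{s\}$, $w_{sa}=w_{ab}=1$, $w_{sb}=2.09$, so $T=(0,1,2)$, and set $x=(0,\,1.1,\,2.2)$. Then $(Fx)(a)=\min\{1,\,3.2\}=1$ and $(Fx)(b)=\min\{2.1,\,2.09\}=2.09$, so $\delta_{\mathrm{Bell}}(x)=0.11$. Both greedy predecessors are uniquely $s$, the predecessor graph is acyclic, and $m_{\max}=1$. Yet $x(b)-T(b)=0.2>0.11$. Taking $\alpha=1$, $\beta=0$, $\tau=x$ violates the proposition. The same holds for $\tau=\alpha x+\beta$ with general $\alpha$, since residuals and errors both scale by $\alpha$ and predecessors are unchanged.

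The defect is not a constant factor. Take a chain $s,u_1,\dots,u_h$ with unit edges, shortcuts $\{s,u_k\}$ ($k\ge2$) of weight $k+(k-1)\epsilon-\eta$ with $0<\eta<\epsilon$, and $x(u_k)=k(1+\epsilon)$. This has residual $\epsilon+\eta$ and $m_{\max}=1$, but error $h\epsilon$ at $u_h$. Overestimation accumulates along the true shortest path, which the greedy chain of $x$ need not follow.

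What does hold is your fallback. The greedy chain of $\tilde\tau$ is a path from $S$, so $\tilde\tau(i)\ge\alpha T(i)-m(i)\delta$. Lemma~\ref{lem:supp_path_estimate} on the scaled graph, applied along a shortest path with $h(i)$ hops, gives $\tilde\tau(i)\le\alpha T(i)+h(i)\delta$. Hence
\[
\|\tau-(\alpha T+\beta)\|_\infty\le\max\{m_{\max},h_{\max}\}\,\delta\le(N-1)\,\delta,
\]
where $h_{\max}:=\max_i h(i)$ depends only on the graph and $w$, since shortest paths coincide for $w$ and $\alpha w$. Your Steps 1--3 carry this corrected constant over verbatim. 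What you should not do is invoke Theorem~\ref{thm:comparison} as established in its stated form.
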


\begin{proof}
By Lemma~\ref{lem:supp_scaled_homogeneity} and Theorem~\ref{thm:bellman_fp},
\[
F_{\alpha w}(\alpha T)=\alpha F(T)=\alpha T,
\]
so $\alpha T$ is the fixed point of the scaled Bellman operator. Applying Theorem~\ref{thm:comparison} to the normalized field $\tilde\tau$ on the graph with weights $\alpha w$ gives
\[
\|\tilde\tau-\alpha T\|_\infty \le m_{\max}\delta.
\]
Because $\tilde\tau=\tau-\beta$,
\[
\|\tau-(\alpha T+\beta)\|_\infty
=
\|\tilde\tau-\alpha T\|_\infty,
\]
which proves the claim.
\end{proof}

In practice, we first compute $(\widehat\alpha,\widehat\beta)$ by least-squares regression of the simulator-produced activation field against the Eikonal time, and then evaluate the scaled residual
\[
\delta_{\mathrm{affBell}}(\tau;\widehat\alpha,\widehat\beta)
=
\bigl\|(\tau-\widehat\beta)-F_{\widehat\alpha w}(\tau-\widehat\beta)\bigr\|_\infty.
\]
This separates a global timing-scale mismatch from the remaining local deviations, which is why the affine residual is the primary activation certificate reported in the heart-graph experiments.

\section{Diagnostic Metric Definitions}
\label{sec:supp_metrics}

This section gives the operational definitions used by the Stage~1 diagnostics pipeline. The main manuscript reports these metrics at the level needed to interpret the curation policies, while the implementation records the underlying activation, recovery, waveform, and policy metadata for each generated sample.

\subsection{Mechanism-aware interval metrics}

For each node~$i$, let $T_i$ denote the activation time from the graph activation backbone or from the extracted activation marker of the simulated voltage trace. Let $R_i$ denote the local recovery time, defined as the time at which the repolarizing voltage trace crosses the implementation-specific recovery threshold after activation. Let $y_\ell(t)$ denote the simulated ECG in lead~$\ell$.

The interval metrics combine latent graph timings with ECG waveform landmarks. Ventricular activation timing is summarized from the ventricular node set~$V_{\mathrm{vent}}$, and atrial activation timing from the atrial node set~$V_{\mathrm{atr}}$:
\begin{align}
  T_{\mathrm{vent,on}} &= \operatorname{quantile}_{0.05}\{T_i: i\in V_{\mathrm{vent}}\},\\
  T_{\mathrm{vent,off}} &= \operatorname{quantile}_{0.95}\{T_i: i\in V_{\mathrm{vent}}\},\\
  T_{\mathrm{atr,on}} &= \operatorname{quantile}_{0.05}\{T_i: i\in V_{\mathrm{atr}}\}.
\end{align}
The QRS duration proxy is the ventricular activation spread
\begin{equation}
  d_{\mathrm{QRS}} = T_{\mathrm{vent,off}} - T_{\mathrm{vent,on}},
\end{equation}
with ECG waveform landmarks used to reject cases in which the simulated QRS complex is missing, flat, or globally inverted. The PR-related timing proxy is computed from atrial-to-ventricular activation delay,
\begin{equation}
  d_{\mathrm{PR}} = T_{\mathrm{vent,on}} - T_{\mathrm{atr,on}},
\end{equation}
and is interpreted within each experiment rather than as an absolute clinical PR interval.

QT and T-wave metrics use both recovery summaries and lead-wise ECG landmarks. A recovery-based QT proxy is computed as
\begin{equation}
  d_{\mathrm{QT,rec}} =
  \operatorname{quantile}_{0.95}\{R_i: i\in V_{\mathrm{vent}}\}
  - T_{\mathrm{vent,on}}.
\end{equation}
Lead-wise T-wave landmarks are then extracted from the post-QRS window of $y_\ell(t)$, including T-peak time, T-end time, and T-wave amplitude. The reported QT, QTc, and $T_{\mathrm{peak}}$--$T_{\mathrm{end}}$ values use these ECG landmarks when they are stable and use the recovery-time summaries as consistency checks. QTc is obtained from the same QT estimate using the fixed simulated cycle length for the corresponding experiment.

This hybrid definition is specific to the simulation setting: it uses latent activation and recovery fields that are generated by the mechanistic model and are therefore not directly available to a purely waveform-generative model.

\subsection{Waveform morphology metrics}

Waveform morphology metrics are computed from the simulated 12-lead ECG after baseline alignment and common timing-window selection. For each analyzed lead, the detector records P-wave amplitude, QRS amplitude, R-peak polarity, T-wave amplitude, T-wave polarity, and the presence or absence of the expected P-QRS-T ordering. Samples are flagged for gross morphology failure when they contain flatline-like traces, missing QRS complexes, global QRS inversion, implausible wave ordering, or unstable T-wave landmarks.

Lead-specific checks are policy dependent. For example, Lead~II T-wave polarity is used by the final balanced morphology-curation policy, whereas broader recovery-screening policies may accept samples that are useful for recovery-mechanism analysis but not for final curated ECG morphology.

\subsection{Recovery scores}

The recovery plausibility score $s_{\mathrm{rep}}$ is the policy-level screening score reported in the recovery-atlas summary table. It is stored in the experiment outputs as \texttt{recovery\_plausibility\_score}. Each sample receives normalized component scores for transmural recovery-gradient consistency, QT/T-window plausibility, T-wave presence and ordering, and local smoothness of neighboring recovery times. The aggregate score is computed as
\begin{equation}
  s_{\mathrm{rep}} =
  \frac{\sum_m w_m q_m}{\sum_m w_m},
\end{equation}
where $q_m\in[0,100]$ denotes a normalized component score and $w_m$ denotes the corresponding policy weight. Larger values indicate more plausible recovery behavior under the selected policy. Samples below the policy threshold are rejected with the \texttt{balanced\_recovery\_score} flag in the curation logs.

The repolarization score is distinct from $s_{\mathrm{rep}}$. It is reported as a local diagnostic rather than as the primary acceptance score. It emphasizes smoothness of neighboring recovery times on the myocardial graph and is used to interpret the effect of the diffusion strength~$\kappa$ on local recovery regularity. Thus, recovery plausibility is the policy-level score used for screening, whereas the repolarization score is a mechanism-oriented smoothness diagnostic.

\subsection{Feature-space coverage}

Feature-space coverage quantifies diversity among accepted ECGs. For the final curation experiment, selected interval, amplitude, and polarity features are discretized into fixed bins over the admissible feature range. Coverage is the fraction of occupied bins,
\begin{equation}
  C = \frac{\#\{\text{occupied admissible bins}\}}
           {\#\{\text{admissible bins}\}}.
\end{equation}
Per-model coverage is computed separately for \textrm{ET} and \textrm{RE} under the same sampling budget. Pooled coverage is computed after combining accepted samples from both models and is used only as a summary of the joint accepted feature space.

\end{document}